\documentclass{article}
\usepackage{fullpage}
\usepackage{amsthm}
\newtheorem{theorem}{Theorem}
\newtheorem{lemma}[theorem]{Lemma}
\newtheorem{proposition}[theorem]{Proposition}
\newenvironment{keywords}{\begin{center}Keywords:}{\end{center}}
\newcommand{\email}[1]{Email: #1}

\usepackage{amsmath,amssymb,amsfonts}
\usepackage{tikz}

\newcommand{\N}{\mathbb{N}}
\newcommand{\Z}{\mathbb{Z}}
\newcommand{\R}{\mathbb{R}}

\newcommand{\card}{\#}
\newcommand{\st}{\text{s.t.}}
\newcommand{\E}{\mathbb{E}}

\newcommand{\B}{{\cal B}}
\newcommand{\NB}{{\cal N}}
\renewcommand{\d}{\,\textnormal{d}}
\newcommand{\medcup}{\scalebox{1.5}{$\cup$}}
\newcommand{\medcap}{\scalebox{1.5}{$\cap$}}
\DeclareMathOperator{\vol}{vol}

\title{A note on the complexity of two-stage stochastic linear optimization with small second stage}

\author{Christoph~Buchheim\thanks{Fakult\"at f\"ur Mathematik, TU Dortmund, Germany. \email{christoph.buchheim@tu-dortmund.de}}}

\begin{document}

\maketitle

\begin{abstract}
  Two-stage stochastic linear optimization is known to be \card P-hard
  when all involved random variables are independently and uniformly
  distributed over intervals, even with fixed recourse. We show that
  this problem is actually \card P-hard in the strong sense. More
  surprisingly, this hardness persists when the random vector is
  one-dimensional, i.e., uniformly distributed over a single
  interval. To obtain this result, we show that computing the area of
  a two-dimensional polytope given by a compact extended formulation
  is strongly \card P-hard.
  
  Furthermore, we obtain the same complexity result in case the number
  of second-stage constraints is fixed (for a problem in standard
  form), while fixing the number of second-stage variables leads to a
  weakly \card P-hard problem. Finally, if both the dimension of the
  random vector and the number of second-stage constraints are fixed,
  the problem turns out to be tractable.
\end{abstract}

\begin{keywords}
  Stochastic Optimization, \card P-hardness, Extended Formulations
\end{keywords}

\section{Introduction}

In two-stage stochastic optimization, some of the problem parameters
may depend on a random vector. The decision maker has to take some of
her decisions before the random variables are realized (the
\emph{first stage} or \emph{here-and-now} decisions), while the
remaining decisions can be taken afterwards (the \emph{second stage},
\emph{wait-and-see}, or \emph{recourse} decisions). The goal is to
optimize the expected objective value. The latter depends on the
probability distribution of the random parameters, which we assume to
be part of the input. In case of a linear optimization problem, the
two-stage stochastic counterpart can be stated as
\begin{equation}\label{prob:sslp}\tag{SSLP}
\begin{array}{rl}
  \min \quad & c^\top x +\E_\xi[Q(x,\xi)]\\[0.75ex]
  \st \quad & Ax\le b\;,
\end{array}
\end{equation}
where the second-stage objective value~$Q(x,\xi)$ depends on the
first-stage decision~$x$ and on a random variable~$\xi$, namely,
\begin{align*}
  Q(x,\xi) \; = \; \min \quad & q_\xi^\top y\\
  \st \quad & T_\xi x+W y=h_\xi,\quad y\ge 0\;.
\end{align*}
Here $c\in\R^{n_1}$, $A\in\R^{m_1\times n_1}$, and~$b\in\R^{m_1}$
describe the first-stage objective and constraints, where we assume
for the sake of simplicity that~$\{x\in\R^{n_1}\mid Ax\le b\}$ is
non-empty and bounded.  Moreover,~$q_\xi\in\R^{n_2}$,
$T_\xi\in\R^{m_2\times n_1}$, $W\in\R^{m_2\times n_2}$,
and~$h_\xi\in\R^{m_2}$ define the second-stage linear optimization
problem over the variables~$y\in\R^{n_2}$. We assume here
that~$q_\xi$, $T_\xi$, and $h_\xi$ depend on the random variable~$\xi$
in an affine-linear way. In this note, we restrict ourselves to the
case of fixed recourse, meaning that~$W$ does not depend on~$\xi$,
which is a common assumption in two-stage stochastic linear
optimization. For more details, we refer to the
textbook~\cite{Birge11}.

If the random variable~$\xi$ has finite support, it is easy to see
that~\eqref{prob:sslp} can be reformulated as an equivalent single
stage linear optimization problem. For this, we can essentially
introduce a copy of the second stage for each possible realization
of~$\xi$. In particular, problem~\eqref{prob:sslp} can be solved
efficiently if the support of~$\xi$ is given explicitly as part of the
input. This is not possible, however, if the distribution of~$\xi$ is
continuous or if the support is finite but exponential in the input.
In fact, the complexity picture changes drastically if~$\xi$ follows a
continuous uniform distribution, even if all entries of~$\xi$ are
independently distributed. In the remainder of this note, we
assume~$\xi\sim{\cal U}[l,u]$ with~$l,u\in\R^d$, $l<u$. It was shown
by Hanasusanto et al.\ \cite{Hanasusanto16} that~\eqref{prob:sslp} is
\card P-hard in this setting, even if only the objective
vector~$q_\xi$ is random. Since the objective in~\eqref{prob:sslp} is
convex in~$x$, the difficulty of solving~\eqref{prob:sslp} essentially
stems from the difficulty of computing objective values, i.e., of
computing~$\E_\xi[Q(x,\xi)]$ for a given first-stage
decision~$x$. This result even holds for~$n_1=1$, i.e., in case of a
single first-stage variable.

The proof presented in~\cite{Hanasusanto16} requires both an unbounded
number of second-stage variables and an unbounded dimension of the
random vector~$\xi$.  It is thus a natural question to ask which
complexity results persist if one or both of these parameters are
fixed. Surprisingly, to the best of our knowledge, these questions
have not been discussed in the literature so far -- although one could
argue that in practice the sample space is often low-dimensional, even
if many coefficients in the second-stage problem are affected by the
uncertainty. For instance, the only relevant random variables in an
agricultural application may be the amount of rain and sunshine during
a certain period. Moreover, using principle component analysis, the
dimension of the random vector can often be reduced without losing too
much in terms of solution quality. This motivated our investigation of
the complexity of~\eqref{prob:sslp} with fixed dimension~$d$.

Unfortunately, Problem~\eqref{prob:sslp} turns out to be \card P-hard
even when restricted to~$d=1$, i.e., to a single random variable
(Section~\ref{sec:fixed_d}), or to~$n_2=2$ and~$m_2=1$, i.e., to a
second-stage problem with very few variables and constraints
(Section~\ref{sec:fixed_mn}). Both is true even in the case of fixed
recourse and for~$n_1=1$. In fact, we show \card P-hardness in the
strong sense for~$d=1$ or~$m_2=1$. However, if both~$m_2$ and~$d$ are
fixed, the problem turns tractable (Section~\ref{sec:fixed_both}).

\section{Fixed dimension of the random vector}\label{sec:fixed_d}

We first discuss the special case of Problem~\eqref{prob:sslp} where
the dimension~$d$ of the random vector is fixed. Our aim is to reduce
\card IS, the \card P-complete counting version of the independent set
problem, to the restriction of~\eqref{prob:sslp} with~$d=1$. This will
imply that the latter restriction is still \card P-hard, even in the
strong sense. To this end, we use a construction that is inspired by
the extended formulation of a regular polygon presented by Ben-Tal and
Nemirovski~\cite{BenTal01}. For the following, let~$Q_k$ denote the
regular $k$-gon centered in the origin with vertices
\[
v_i:=\sec(\tfrac\pi k)(\cos((2i+1)\tfrac\pi k),\sin((2i+1)\tfrac\pi
k))^\top,\quad i=0,\dots,k-1\;.\] Then $||v_i||=\sec(\tfrac\pi k)$ for
all~$i=0,\dots,k-1$ and~$\vol(Q_{k})=k\tan(\tfrac\pi k)$.  The center
point of the facet of~$Q_k$ connecting~$v_i$ and~$v_{i+1}$
(setting~$v_{k}:=v_0$) is denoted by
\[
w_i:=\tfrac 12(v_i+v_{i+1})=(\cos((2i+2)\tfrac\pi
k),\sin((2i+2)\tfrac\pi k))^\top\;,\] it has length~$||w_i||=1$ and
agrees with the normal vector of the corresponding facet for
all~$i=0,\dots,k-1$.

Now to start the construction, let a simple undirected graph~$G=(V,E)$
be given, where we assume $V=\{1,\dots,n\}$ with~$n\ge 2$. Consider
the polyhedron
\begin{eqnarray*}
  P^\emptyset_n ~ := ~ & \big\{ & (y,z)\in\R^{n+1}\times\R^{n+1}\mid\\
  && \; \left.\begin{array}{l}
    \; y_{i+1} = \cos(\tfrac\pi{2^{i-1}})y_{i}+\sin(\tfrac\pi{2^{i-1}})z_{i}\\[1ex]
    \; z_{i+1} \ge -\sin(\tfrac\pi{2^{i-1}})y_{i}+\cos(\tfrac\pi{2^{i-1}})z_{i}\\[1ex]
    \; z_{i+1} \ge +\sin(\tfrac\pi{2^{i-1}})y_{i}-\cos(\tfrac\pi{2^{i-1}})z_{i}
  \end{array} ~ \right\} ~ \forall i=1,\dots,n\\
  &&\quad\;  y_{n+1} \le 1, ~ \cos(\tfrac\pi{2^{n-1}})y_{n+1}+\sin(\tfrac\pi{2^{n-1}})z_{n+1}\le 1\big\}\;.
\end{eqnarray*}
This formulation contains the matrix
\[
R_i:=\begin{pmatrix} \cos(\tfrac\pi{2^{i-1}}) &
\sin(\tfrac\pi{2^{i-1}})\\ -\sin(\tfrac\pi{2^{i-1}}) &
\cos(\tfrac\pi{2^{i-1}})\\
\end{pmatrix}
\]
which describes the clockwise rotation in~$\R^2$ around the origin
by~$\tfrac\pi{2^{i-1}}$ degrees, i.e., by one $2^i$-th of a full
rotation. Moreover, we will denote by~$R$ the reflection in~$\R^2$
with respect to the first axis, which flips the sign of the second
entry. The geometric idea behind the construction in~\cite{BenTal01}
is captured in the following lemma.

\begin{lemma}\label{lem:basic}
  For each vertex~$v_\ell$ of~$Q_{2^n}$, there exists a
  unique~$b^\ell\in\{0,1\}^n$ with
  \[
  R^{b^\ell_n}R_n\cdots R^{b^\ell_1}R_1 v_\ell=v_0\;.
  \]
\end{lemma}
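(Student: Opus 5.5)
The plan is to work entirely with polar angles, measured in units of $\pi/2^n$ and taken modulo $2\pi$. Write $k=2^n$. The vertex $v_\ell$ has angle $(2\ell+1)\pi/k$, an odd multiple of the unit, and $v_0$ has angle exactly one unit. The matrix $R_i$ is a clockwise rotation by $2\pi/2^i$, so it subtracts $2^{n+1-i}$ units from the angle. The reflection $R$ negates the angle. All these maps preserve the norm, and they preserve the fact that the angle is an odd number of units, since $2^{n+1-i}$ is even for $i\le n$. So it suffices to track the angle, which always stays an odd integer modulo $2^{n+1}$.

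For existence I will prove by induction on $i$ the following claim: for a suitable choice of $b_1,\dots,b_i$, the point after applying $R^{b_i}R_i\cdots R^{b_1}R_1$ to $v_\ell$ has angle in the open interval $(0,2\pi/2^i)$.

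\emph{Induction step.} Before step $i$, the angle lies in $(0,4\pi/2^i)$; for $i=1$ this interval is $(0,2\pi)$, which holds trivially for every $v_\ell$. Applying $R_i$ moves the angle into $(-2\pi/2^i,2\pi/2^i)$. The new angle is nonzero because it is an odd number of units.
\begin{itemize}
\item If the angle is positive, set $b_i=0$.
\item If it is negative, set $b_i=1$, so that $R$ reflects it into $(0,2\pi/2^i)$.
\end{itemize}

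\emph{Conclusion.} At $i=n$ the angle is an odd number of units in $(0,2)$, so it equals $1$ unit. The norm is still $\sec(\pi/k)$, hence the final point is exactly $v_0$.

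For uniqueness I will use a counting argument rather than tracking the step-by-step choices. For each $b\in\{0,1\}^n$, the map $M_b:=R^{b_n}R_n\cdots R^{b_1}R_1$ is invertible. Therefore $M_b v=v_0$ holds for at most one point $v$, namely $v=M_b^{-1}v_0$. Existence shows that each of the $2^n$ distinct vertices $v_\ell$ equals $M_b^{-1}v_0$ for some $b$. The assignment $b\mapsto M_b^{-1}v_0$ on the $2^n$ vectors of $\{0,1\}^n$ therefore covers all $2^n$ vertices. A surjection between two sets of size $2^n$ is a bijection, so each vertex arises from exactly one $b$, which gives uniqueness.

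The only delicate point is the bookkeeping in the induction: the interval bound before step $i$ must match the one obtained after step $i-1$, since $4\pi/2^i=2\pi/2^{i-1}$. I also need the parity argument to exclude the boundary case where the angle becomes $0$ after rotation. Both are routine once the angles are written in units of $\pi/2^n$.
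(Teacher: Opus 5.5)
Your proof is correct. The existence half is essentially the paper's argument. It uses the same greedy rule (reflect exactly when the rotated vector lies below the first axis) and the same halving of the admissible angle interval. You identify the endpoint through the parity of the angle in units of $\tfrac\pi{2^n}$. The paper instead notes that $R$ and all $R_i$ permute the vertices of $Q_{2^n}$, and that $v_0$ is the only vertex with angle in $[0,\tfrac\pi{2^{n-1}}]$. The two observations are interchangeable.

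For uniqueness you take a genuinely different route. The paper argues locally. For $b\neq b^\ell$ it fixes the first index $i$ with $b_i\neq b^\ell_i$ and observes that the wrong choice leaves the vector with non-positive second entry. A second induction then shows that the angle stays in $[\tfrac\pi{2^{j-1}},2\pi]$ for all $j\ge i$, so the endpoint cannot be $v_0$.

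You instead combine invertibility of $M_b$ with a pigeonhole count. The map $b\mapsto M_b^{-1}v_0$ has a $2^n$-element domain, and by existence its image contains all $2^n$ vertices. Hence the image is exactly the vertex set and the map is injective. Since this map a priori lands in $\R^2$, it is this cardinality observation that justifies calling it a ``surjection between two sets of size $2^n$''; alternatively, note that $M_b^{-1}$ permutes the vertices.

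Your version is shorter and avoids the second angle induction with its endpoint bookkeeping. It also directly produces the bijection between $\{0,1\}^n$ and the vertices of $Q_{2^n}$ that the paper invokes right after the lemma. The paper's version, in turn, handles each vertex in isolation, without appealing to existence for the other vertices. It also shows concretely that any deviating sequence ends at an angle of at least $\tfrac\pi{2^{n-1}}$.
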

\begin{proof}
  To show existence, we define~$b^\ell$ recursively for~$i=1,\dots,n$,
  setting~$b^\ell_i:=1$ if and only
  if
  $$R_{i}R^{b^\ell_{i-1}}R_{i-1}\cdots R^{b^\ell_1}R_1 v_\ell$$
  has a
  negative second entry. Then it follows by induction that the angle
  of~$R^{b^\ell_{i}}R_{i}\cdots R^{b^\ell_1}R_1 v_\ell$ lies in the
  range~$[0,\tfrac\pi{2^{i-1}}]$. In particular, the final
  vector~$v=R^{b^\ell_{n}}R_{n}\cdots R^{b^\ell_1}R_1 v_\ell$ has an
  angle in~$[0,\tfrac\pi{2^{n-1}}]$. Moreover,~$v=v_{\ell'}$ for
  some~$\ell'\in\{0,\dots,2^n-1\}$, since all involved rotations and
  reflections permute the vertices of~$Q_{2^n}$. Thus~$v=v_0$, the
  only vertex with angle in~$[0,\tfrac\pi{2^{n-1}}]$.

  To show uniqueness, let~$b\in\{0,1\}^n\setminus\{b^\ell\}$ and
  let~$i$ be minimal with~$b_i\neq b^\ell_i$. Then~$R^{b_i}R_{i}\cdots
  R^{b_1}R_1 v_\ell$ has a non-positive second entry and an angle
  in~$[\tfrac\pi{2^{i-1}},2\pi]$. It follows by induction
  that~$R^{b_j}R_{j}\cdots R^{b_1}R_1 v_\ell$ has an angle
  in~$[\tfrac\pi{2^{j-1}},2\pi]$ for all~$j\ge i$,
  hence~$R^{b_n}R_{n}\cdots R^{b_1}R_1 v_\ell$ cannot agree
  with~$v_0$, which has an angle of~$\tfrac\pi{2^n}$.
\end{proof}
By Lemma~\ref{lem:basic}, the vertices~$v_\ell$ of~$Q_{2^n}$ are in
one-to-one correspondence with the binary
vectors~$b_\ell\in\{0,1\}^n$; see Figure~\ref{fig:vertices} for an
illustration. In particular, they correspond to the subsets of~$V$
via~$S_\ell:=\{i\in V\mid b^\ell_i=0\}$. Note that the connection
between~$\ell$ and~$b^\ell$ is different from the standard binary
expansion of~$\ell$.
\begin{figure}
  \begin{center}
    \begin{tikzpicture}[scale=1.25]
      \def\k{4}
      \foreach \i in {1,...,\k} {
        \pgfmathsetmacro{\angle}{(2*\i-1)*180/\k}
        \pgfmathsetmacro{\radius}{1/cos(180/\k)}
        \coordinate (v\i) at ({\radius*cos(\angle)}, {\radius*sin(\angle)});
      }
      \draw[thin,gray!50!white] (0,0) circle (1);
      \draw[thick] (v1) \foreach \i in {2,...,\k} { -- (v\i) } -- cycle;
      \foreach \i in {1,...,\k} {
        \pgfmathsetmacro{\angle}{(2*\i-1)*180/\k}
        \ifcase\i
        \or
        \def\label{10}
        \or
        \def\label{11}
        \or
        \def\label{01}
        \or
        \def\label{00}
        \fi
        \def\slabel{\scriptsize{\label}};
        \node[shift={(\angle:9pt)}] at (v\i) {\slabel};
      }
      \draw[dashed] (-1.5,0)--(1.5,0);
    \end{tikzpicture}
    \qquad
    \begin{tikzpicture}[scale=1.25]
      \def\k{8}
      \foreach \i in {1,...,\k} {
        \pgfmathsetmacro{\angle}{(2*\i-1)*180/\k}
        \pgfmathsetmacro{\radius}{1/cos(180/\k)}
        \coordinate (v\i) at ({\radius*cos(\angle)}, {\radius*sin(\angle)});
      }
      \draw[thin,gray!50!white] (0,0) circle (1);
      \draw[thick] (v1) \foreach \i in {2,...,\k} { -- (v\i) } -- cycle;
      \foreach \i in {1,...,\k} {
        \pgfmathsetmacro{\angle}{(2*\i-1)*180/\k}
        \ifcase\i
        \or
        \def\label{100}
        \or
        \def\label{101}
        \or
        \def\label{111}
        \or
        \def\label{110}
        \or
        \def\label{010}
        \or
        \def\label{011}
        \or
        \def\label{001}
        \or
        \def\label{000}
        \fi
        \def\slabel{\scriptsize{\label}};
        \node[shift={(\angle:7.5pt)}] at (v\i) {\slabel};
      }
      \draw[dashed] (-1.5,0)--(1.5,0);
    \end{tikzpicture}
  \end{center}
  \caption{The regular $2^n$-gon $Q_{2^n}$ for $n=2,3$ and the binary
    vectors~$b^\ell$ assigned to its vertices.}\label{fig:vertices}
\end{figure}
We also remark that~$R^{b^\ell_n}R_n\cdots R^{b^\ell_1}R_1$ maps any
vector with an angle in~$[\ell\tfrac \pi{2^{n-1}},(\ell+1)\tfrac
  \pi{2^{n-1}}]$, i.e., between the angles of~$w_{\ell-1}$
and~$w_\ell$, to a vector with angle in~$[0,\tfrac \pi{2^{n-1}}]$.

The following result is shown in~\cite[Prop.~2.1]{BenTal01}, but we
give a slightly different proof here, as its core idea is essential
for motivating the rest of the construction.
\begin{proposition}\label{prop:proj}
  The projection $\bar P^\emptyset_n$ of~$P^\emptyset_n$
  to~$(y_1,z_1)$ agrees with~$Q_{2^n}$.
\end{proposition}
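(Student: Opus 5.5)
The plan is to prove the two inclusions $Q_{2^n}\subseteq\bar P^\emptyset_n$ and $\bar P^\emptyset_n\subseteq Q_{2^n}$ separately. The first is constructive and uses Lemma~\ref{lem:basic}. The second propagates the two final inequalities backwards through the $n$ rotation/absolute-value steps. Throughout I read the $i$-th block of constraints as follows: writing $(r_i,s_i):=R_i(y_i,z_i)^\top$, it says $y_{i+1}=r_i$ and $z_{i+1}\ge|s_i|$. So $(y_{i+1},z_{i+1})$ is obtained from either $R_i(y_i,z_i)^\top$ or $RR_i(y_i,z_i)^\top$ by increasing the second coordinate. The two final inequalities are $w_{-1}^\top(y_{n+1},z_{n+1})\le 1$ with $w_{-1}=(1,0)^\top$, and $w_0^\top(y_{n+1},z_{n+1})\le1$. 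These are exactly the two facets of $Q_{2^n}$ incident to $v_0$.

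\emph{Inclusion $Q_{2^n}\subseteq\bar P^\emptyset_n$.} Since $\bar P^\emptyset_n$ is a projection of a polyhedron, it is convex, so it suffices to lift each vertex $v_\ell$. Take $b^\ell$ from Lemma~\ref{lem:basic}, put $(y_1,z_1):=v_\ell$ and $(y_{i+1},z_{i+1}):=R^{b^\ell_i}R_i(y_i,z_i)^\top$. By the recursive choice of $b^\ell_i$ in the proof of Lemma~\ref{lem:basic} (reflect exactly when the second entry is negative), we get $z_{i+1}=|s_i|$. Hence all inequalities hold, and $y_{i+1}=r_i$ holds with equality. By the lemma, $(y_{n+1},z_{n+1})=v_0$, which satisfies both final inequalities with equality, since $v_0$ lies on both facets.

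\emph{Inclusion $\bar P^\emptyset_n\subseteq Q_{2^n}$.} I will prove by backward induction on $i=n+1,\dots,1$ that every feasible point satisfies
\[
w^\top(y_i,z_i)\le 1 \quad\text{for all } w\in W_i,
\]
where $W_i$ is the set of facet normals $w_j$ of $Q_{2^n}$ whose angle lies in $[0,\tfrac{\pi}{2^{i-2}}]$; for $i=n+1$ this is the closed arc between $w_{-1}$ and $w_0$. For $i=1$ the arc $[0,2\pi]$ contains all normals, so $(y_1,z_1)$ satisfies every facet inequality and lies in $Q_{2^n}$. The base case $i=n+1$ is exactly the two final constraints. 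For the inductive step, fix $w\in W_i$. Then $R_i^{-1}$ maps $w$ to a normal $w'=R_i w$, and I need to show $w'$ or $Rw'$ lies in $W_{i+1}$ with nonnegative second coordinate. For such a normal $u=(u_1,u_2)$ with $u_2\ge0$ we have
\[
u^\top(r_i,s_i)\le u_1r_i+u_2|s_i|\le u_1y_{i+1}+u_2z_{i+1}\le 1 .
\]
Together with $w^\top(y_i,z_i)=(R_iw)^\top(r_i,s_i)$ and $(Ru)^\top(r,s)=u^\top(r,-s)$, this yields $w^\top(y_i,z_i)\le1$ whenever $R_iw\in W_{i+1}$ or $RR_iw\in W_{i+1}$.

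\emph{Main obstacle.} The delicate step is the angular bookkeeping: checking that rotation by $\tfrac{\pi}{2^{i-1}}$ followed by the reflection $R$ folds the arc of normals $W_i$ onto $W_{i+1}$, i.e.\ the normals in the upper half-plane at the next level. I would settle this by the same angle-interval argument as in the proof of Lemma~\ref{lem:basic}, essentially the remark after Figure~\ref{fig:vertices}. The folding $R_i(\cdot)$ followed by $R$ on negative second entry maps angles $[0,\tfrac{\pi}{2^{i-2}}]$ onto $[0,\tfrac{\pi}{2^{i-1}}]$. One also has to confirm that facet normals are mapped to facet normals, which holds because $R_i$ and $R$ permute the $w_j$ of $Q_{2^n}$ for $i\le n$. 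If the exact endpoints of the arcs turn out off by one, I would adjust the definition of $W_i$ accordingly. The essential mechanism is that every normal used at stage $i+1$ has nonnegative second coordinate, so that $z_{i+1}\ge|s_i|$ can be exploited.
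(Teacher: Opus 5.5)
Your proposal is correct and follows the paper's proof. The first inclusion is the same vertex lifting via the $b^\ell$ of Lemma~\ref{lem:basic}. Your backward induction over the arcs $W_i$ is a repackaging of the paper's forward chain $w_\ell^\top(y_1,z_1)\le(R^{b^\ell_1}R_1w_\ell)^\top(y_2,z_2)\le\dots\le 1$, using the same ingredients: orthogonality of $R_i$, nonnegative second entry of the folded normal, and $z_{i+1}\ge|s_i|$.

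Your arc endpoints are already correct as defined, so no off-by-one adjustment is needed. Your version also makes explicit that the folded normal ends at $w_{-1}=(1,0)^\top$ or at $w_0$, so both final constraints are needed; the paper's displayed chain glosses over this by always writing $w_0$.
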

\begin{proof}
  To show~$Q_{2^n}\subseteq \bar P^\emptyset_n$, it suffices to
  consider the vertices~$v_\ell$ of~$Q_{2^n}$. Then
  defining~$(y_1,z_1)^\top:=v_\ell$
  and~$(y_{i+1},z_{i+1})^\top:=R^{b^\ell_i}R_i(y_i,z_i)^\top$,
  for~$i=1,\dots,n$, yields a vector~$(y,z)$ that satisfies the
  recursive constraints in~$P^\emptyset_n$ by
  construction. Furthermore, we obtain
  $(y_{n+1},z_{n+1})^\top=v_0=(1,\tan(\tfrac\pi{2^n}))^\top$ from
  Lemma~\ref{lem:basic}, which satisfies the remaining two
  constraints.

  For the other inclusion, let~$(y,z)\in P^\emptyset_n$. We show
  that~$(y_1,z_1)$ then satisfies all facet-defining inequalities
  for~$Q_{2^n}$, i.e.,~$w_\ell^\top (y_1,z_1)^\top\le 1$ for
  each~$\ell$.  As mentioned above, $R^{b^\ell_i}R_i\cdots
  R^{b^\ell_1}R_1w_\ell$ has a non-negative second entry for
  all~$i=1,\dots,n$. In particular, since all involved rotations and
  reflections are orthogonal, we obtain
  \begin{eqnarray*}
    w_\ell^\top(y_1,z_1)^\top
    & = & (R^{b^\ell_1}R_1w_\ell)^\top R^{b^\ell_1}R_1(y_1,z_1)^\top\\
    & \le & (R^{b^\ell_1}R_1w_\ell)^\top (y_2,z_2)^\top\\
    & = & (R^{b^\ell_2}R_2R^{b^\ell_1}R_1w_\ell)^\top R^{b^\ell_2}R_2(y_2,z_2)^\top\\
    & \le & (R^{b^\ell_2}R_2R^{b^\ell_1}R_1w_\ell)^\top (y_3,z_3)^\top\\
    & \vdots & \\
    & \le & (R^{b^\ell_n}R_n\cdots R^{b^\ell_1}R_1w_\ell)^\top (y_{n+1},z_{n+1})^\top\\
    & = & w_0^\top (y_{n+1},z_{n+1})^\top\\
    & = & \cos(\tfrac\pi{2^{n-1}})y_{n+1}+\sin(\tfrac\pi{2^{n-1}})z_{n+1}\le 1\;,
  \end{eqnarray*}
  which yields the desired result.
\end{proof}
In the recursive construction of~$(y,z)$ from~$v_\ell$ described
above, the vector~$R_i(y_{i},z_{i})^\top$ is reflected if and only
if~$b^\ell_i=1$. We will exploit this for cutting off from~$\bar
P_n^\emptyset$ exactly those vertices corresponding to dependent sets
in~$G$. To this end, we define a further polyhedron for each
edge~$e\in E$ as follows:
\begin{eqnarray*}
  P^e_n:= & \big\{ & (y,z)\in\R^{n+1}\times\R^{n+1}\mid\\
  && \left.\begin{array}{l}
    \,\,y_{i+1} = \cos(\tfrac\pi{2^{i-1}})y_{i}+\sin(\tfrac\pi{2^{i-1}})z_{i}\\[1ex]
    \,\,z_{i+1} = -\sin(\tfrac\pi{2^{i-1}})y_{i}+\cos(\tfrac\pi{2^{i-1}})z_{i} \hspace*{4.25ex} \text{ if~$i\in e$}\\[1ex]
    \hspace*{-0.66ex}\left.\begin{array}{l}
    z_{i+1} \ge -\sin(\tfrac\pi{2^{i-1}})y_{i}+\cos(\tfrac\pi{2^{i-1}})z_{i}\\[1ex]
    z_{i+1} \ge +\sin(\tfrac\pi{2^{i-1}})y_{i}-\cos(\tfrac\pi{2^{i-1}})z_{i}
    \end{array}
    ~ \right\} \text{ if~$i\not\in e$}
  \end{array}\right\} ~ \forall i=1,\dots,n\\
  &&\;\;\;\, \cos(\tfrac\pi{2^n})y_{n+1}+\sin(\tfrac\pi{2^n})z_{n+1}\le \cos(\tfrac\pi{2^n})
  \big\}
\end{eqnarray*}
Finally, we denote by~$\bar P^e_n$ the projection of~$P^e_n$
to~$(y_1,z_1)$ and define
\[
\bar P_G:=\underset{e\in E\cup\{\emptyset\}}{\medcap}\bar P^e_n\;.
\]
In summary, we have constructed an extended formulation of the
two-dimensional polytope~$\bar P_G$ that can be computed in polynomial
time from the graph~$G$. We will show that we can derive the number of
independent sets of~$G$ from the area of~$\bar P_G$.
\begin{lemma}\label{lem:main}
  For each~$e\in E$, the intersection $\bar
  P^\emptyset_n\cap\bar P^e_n$ agrees with the closure of
  \[
  Q_{2^n}\setminus\underset{\ell:\,e\subseteq
    S_\ell}{\medcup}\Delta_\ell\;,
  \]
  where $\Delta_\ell$ is the triangle spanned by~$w_{\ell-1}$,
  $v_\ell$, and~$w_{\ell}$.
\end{lemma}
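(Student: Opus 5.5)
The plan is to prove the two inclusions separately, with $\bar P^\emptyset_n=Q_{2^n}$ by Proposition~\ref{prop:proj}. Throughout, let $u:=(\cos(\tfrac\pi{2^n}),\sin(\tfrac\pi{2^n}))^\top$, the unit vector pointing to $v_0$. The last constraint of $P^e_n$ reads $u^\top(y_{n+1},z_{n+1})^\top\le\cos(\tfrac\pi{2^n})$. Its boundary line passes through $w_{-1}:=w_{2^n-1}=(1,0)^\top$ and $w_0$, so it cuts off exactly $\Delta_0$ from $Q_{2^n}$.

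For each $\ell$, the closure of $Q_{2^n}\setminus\Delta_\ell$ equals $Q_{2^n}\cap\{x\mid u_\ell^\top x\le\cos(\tfrac\pi{2^n})\}$, where $u_\ell:=v_\ell/\|v_\ell\|$. Hence the target set is the convex polygon obtained by intersecting $Q_{2^n}$ with the half-planes $u_\ell^\top x\le\cos(\tfrac\pi{2^n})$ for all $\ell$ with $e\subseteq S_\ell$. Its vertices are the midpoints $w_k$ together with those $v_\ell$ satisfying $e\not\subseteq S_\ell$.

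\emph{Inclusion ``$\subseteq$''.} Fix $\ell$ with $e\subseteq S_\ell$, i.e.\ $b^\ell_i=0$ for both $i\in e$, and let $(y,z)\in P^e_n$. I will copy the chain of inequalities from the proof of Proposition~\ref{prop:proj}, with $w_\ell$ replaced by $u_\ell$. Writing $M_i:=R^{b^\ell_i}R_i\cdots R^{b^\ell_1}R_1$, we have $M_nu_\ell=u$ by Lemma~\ref{lem:basic}. Moreover, every $M_iu_\ell$ has a non-negative second entry, since the direction of $v_\ell$ lies between those of $w_{\ell-1}$ and $w_\ell$ and the remark after Lemma~\ref{lem:basic} applies.

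For $i\notin e$, the two inequalities on $z_{i+1}$ give $(M_iu_\ell)^\top R^{b^\ell_i}R_i(y_i,z_i)^\top\le(M_iu_\ell)^\top(y_{i+1},z_{i+1})^\top$, exactly as before. For $i\in e$, we have $b^\ell_i=0$, so the equality constraints give $(y_{i+1},z_{i+1})^\top=R_i(y_i,z_i)^\top$ and the step holds with equality. Chaining these steps yields $u_\ell^\top(y_1,z_1)^\top\le u^\top(y_{n+1},z_{n+1})^\top\le\cos(\tfrac\pi{2^n})$. Thus $\bar P^e_n$ lies in every relevant half-plane.

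\emph{Inclusion ``$\supseteq$''.} Since $\bar P^e_n$ is convex, being a projection of a polyhedron, it suffices to show that every vertex of the target polygon lies in $\bar P^e_n$. For each such vertex I will construct a feasible lifting.

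For a midpoint $w_k$, I set $(y_1,z_1)^\top:=w_k$ and, for each $i$, either $(y_{i+1},z_{i+1})^\top:=R_i(y_i,z_i)^\top$ or its reflection. I use no reflection at $i\in e$, and for $i\notin e$ I reflect exactly when the second entry is negative, so that the inequalities are tight and hence satisfied. All $R_i$ and $R$ permute the midpoints, so $(y_{n+1},z_{n+1})^\top=w_j$ for some $j$. The final constraint holds because $u^\top w_j=\cos(\text{angle difference})$, and this difference is at least $\tfrac\pi{2^n}$ for every $j$.

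For a vertex $v_\ell$ with $b^\ell_i=1$ for some $i\in e$, I use the same recursion: follow $b^\ell$, except that reflections are suppressed at $i\in e$. Every intermediate vector is again a vertex of $Q_{2^n}$. The final vector is reached by a sign sequence different from $b^\ell$, so by the uniqueness part of Lemma~\ref{lem:basic} it is some $v_k$ with $k\neq0$. For $k\neq0$ the angle between $v_k$ and $u$ is at least $\tfrac{2\pi}{2^n}$, so $u^\top v_k\le\sec(\tfrac\pi{2^n})\cos(\tfrac{2\pi}{2^n})\le\cos(\tfrac\pi{2^n})$, using $\cos 2\theta\le\cos^2\theta$.

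The main obstacle is this lifting for the vertices $v_\ell$. One must check that suppressing reflections keeps the vector on the vertex set, which holds because $R_i$ alone permutes vertices. One must also check that the resulting sign sequence really differs from $b^\ell$, so that uniqueness in Lemma~\ref{lem:basic} excludes $v_0$. Finally, intersecting with $\bar P^\emptyset_n=Q_{2^n}$ gives equality with the stated closure.
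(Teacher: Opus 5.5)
Your route is the paper's. The chain of inequalities, with equalities at $i\in e$, gives one inclusion; this is the paper's part (c). Explicit liftings of the midpoints $w_k$ and of the vertices $v_\ell$ with $e\not\subseteq S_\ell$, together with convexity, give the other; these are parts (a) and (b).

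There is, however, a step that fails as you specify it: the lifting of $v_\ell$. ``Follow $b^\ell$, except that reflections are suppressed at $i\in e$'' is not the recursion you used for the midpoints. Let $i_0\in e$ be the first index with $b^\ell_{i_0}=1$. Once the reflection there is suppressed, the trajectory leaves the one generated by $b^\ell$. For a later $j\notin e$, the prescribed choice $b^\ell_j$ then need no longer make the second entry of $R_j(y_j,z_j)^\top$ non-negative. If it does not, the pair of inequalities at $j$ is violated, since they require $z_{j+1}$ to be at least the absolute value of that entry.

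Concretely, take $n=3$, $e=\{1,2\}$ and $v_1$, which has angle $3\pi/8$ and $b^1=(1,0,1)$, so $e\not\subseteq S_1$. Apply $R_1$ and $R_2$ without reflection, then $R_3$. This gives a vector at angle $5\pi/8$, whose second entry is positive. Reflecting it, as $b^1_3=1$ prescribes, makes $z_4$ negative and violates both inequalities at $i=3\notin e$. Your checklist in the last paragraph (the trajectory stays on the vertex set, and the sign sequence differs from $b^\ell$) omits exactly this feasibility check.

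The repair is to use literally the adaptive rule from the midpoint case: for $j\notin e$, reflect if and only if the second entry is negative, which makes all constraints tight. The resulting sign sequence is zero on $e$, whereas $b^\ell_i=1$ for some $i\in e$, so it differs from $b^\ell$. The uniqueness part of Lemma~\ref{lem:basic} then gives a final vertex $v_k$ with $k\neq 0$, and your estimate $u^\top v_k\le\cos(\tfrac\pi{2^n})$ finishes the argument. This is precisely the paper's proof of (b).

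A minor point: every intermediate $M_iu_\ell$ has non-negative second entry because of the recursive definition of $b^\ell$ in the proof of Lemma~\ref{lem:basic}. The remark after that lemma does not give this, since it only concerns the full product $M_n$.
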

\begin{proof}
  It suffices to show the following three statements:
  \begin{itemize}
  \item[(a)] $w_\ell\in\bar P^e_n$ for all~$\ell=0,\dots,2^n-1$.
  \item[(b)] If~$e\not\subseteq S_\ell$, then $v_\ell\in \bar P^e_n$.
  \item[(c)] If~$e\subseteq S_\ell$, the maximum of~$v_\ell^\top
    (y_1,z_1)^\top$ over~$\bar P_n^e$ is attained at~$w_{\ell-1}$
    and~$w_{\ell}$.
  \end{itemize}
  For (a), we again define~$(y,z)\in P_n^e$ recursively. Starting
  with~$(y_1,z_1)^\top:=w_\ell$, we set
  $(y_{i+1},z_{i+1})^\top:=R_i(y_i,z_i)^\top$ if~$i\in e$ or if the
  second entry of~$R_i(y_i,z_i)^\top$ is non-negative,
  and~$(y_{i+1},z_{i+1})^\top:=RR_i(y_i,z_i)^\top$ otherwise. Then the
  resulting vector~$(y,z)$ satisfies all recursive constraints
  and~$(y_{n+1},z_{n+1})$ agrees with~$w_{\ell'}$ for
  some~$\ell'\in\{0,\dots,2^n-1\}$, thus it also satisfies the last
  constraint in~$P_n^e$.

  For (b), we use the same construction as in (a), now starting
  with~$(y_1,z_1)^\top=v_\ell$. Since~$e\not\subseteq S_\ell$, we must
  obtain~$(y_{n+1},z_{n+1})^\top=R^{b_n}R_n\cdots R^{b_1}R_1v_\ell$
  with~$b\neq b^\ell$, so that~$(y_{n+1},z_{n+1})^\top=v_{\ell'}$
  with~$\ell'\neq 0$. It follows again that~$(y_{n+1},z_{n+1})$
  satisfies the last constraint in~$P_n^e$.

  Finally, to show (c), let~$e\subseteq S_\ell$. Then we
  obtain~$v_\ell^\top(y_1,z_1)\le v_0^\top(y_{n+1},z_{n+1})$ exactly
  as in the second direction of the proof of
  Proposition~\ref{prop:proj}, except that the $i$-th and $j$-th
  inequalities now turn into equations for~$e=\{i,j\}$. From the last
  constraint, we thus derive~$v_\ell^\top(y_1,z_1)\le 1$, and this
  bound is attained at~$w_{\ell-1}$ and~$w_\ell$.
\end{proof}
\noindent The statement of Lemma~\ref{lem:main} is illustrated in
Figure~\ref{fig:main}.
\begin{figure}
  \begin{center}
    \begin{tikzpicture}[scale=1.25]
      \def\k{8}
      \foreach \i in {1,...,\k} {
        \pgfmathsetmacro{\angle}{(2*\i-1)*180/\k}
        \pgfmathsetmacro{\radius}{1/cos(180/\k)}
        \coordinate (v\i) at ({\radius*cos(\angle)}, {\radius*sin(\angle)});
        \coordinate (w\i) at ({cos(\angle+180/\k)}, {sin(\angle+180/\k)});
      }

      \fill[gray!50] (v7) -- (w7) -- (w6) -- cycle;
      \draw[thick] (w7) -- (w6);
      \fill[gray!50] (v8) -- (w8) -- (w7) -- cycle;
      \draw[thick] (w8) -- (w7);

      \draw[thick] (v1) \foreach \i in {2,...,\k} { -- (v\i) } -- cycle;
      \foreach \i in {1,...,\k} {
        \fill (v\i) circle (1pt);
      }

      \draw (v1) node[right]{\scriptsize \underline{10}0};
      \draw (v2) node[right]{\scriptsize \underline{10}1};
      \draw (v3) node[left]{\scriptsize \underline{11}1};
      \draw (v4) node[left]{\scriptsize \underline{11}0};
      \draw (v5) node[left]{\scriptsize \underline{01}0};
      \draw (v6) node[left]{\scriptsize \underline{01}1};
      \draw (v7) node[right]{\scriptsize \underline{00}1};
      \draw (v8) node[right]{\scriptsize \underline{00}0};

    \end{tikzpicture}
    \quad
    \begin{tikzpicture}[scale=1.25]
      \def\k{8}
      \foreach \i in {1,...,\k} {
        \pgfmathsetmacro{\angle}{(2*\i-1)*180/\k}
        \pgfmathsetmacro{\radius}{1/cos(180/\k)}
        \coordinate (v\i) at ({\radius*cos(\angle)}, {\radius*sin(\angle)});
      }

      \fill[gray!50] (v5) -- (w5) -- (w4) -- cycle;
      \draw[thick] (w5) -- (w4);
      \fill[gray!50] (v8) -- (w8) -- (w7) -- cycle;
      \draw[thick] (w8) -- (w7);

      \draw[thick] (v1) \foreach \i in {2,...,\k} { -- (v\i) } -- cycle;
      \foreach \i in {1,...,\k} {
        \fill (v\i) circle (1pt);
      }

      \draw (v1) node[right]{\scriptsize \underline{1}0\underline{0}};
      \draw (v2) node[right]{\scriptsize \underline{1}0\underline{1}};
      \draw (v3) node[left]{\scriptsize \underline{1}1\underline{1}};
      \draw (v4) node[left]{\scriptsize \underline{1}1\underline{0}};
      \draw (v5) node[left]{\scriptsize \underline{0}1\underline{0}};
      \draw (v6) node[left]{\scriptsize \underline{0}1\underline{1}};
      \draw (v7) node[right]{\scriptsize \underline{0}0\underline{1}};
      \draw (v8) node[right]{\scriptsize \underline{0}0\underline{0}};
    \end{tikzpicture}
    \quad
    \begin{tikzpicture}[scale=1.25]
      \def\k{8}
      \foreach \i in {1,...,\k} {
        \pgfmathsetmacro{\angle}{(2*\i-1)*180/\k}
        \pgfmathsetmacro{\radius}{1/cos(180/\k)}
        \coordinate (v\i) at ({\radius*cos(\angle)}, {\radius*sin(\angle)});
      }

      \fill[gray!50] (v5) -- (w5) -- (w4) -- cycle;
      \draw[thick] (w5) -- (w4);
      \fill[gray!50] (v7) -- (w7) -- (w6) -- cycle;
      \draw[thick] (w7) -- (w6);
      \fill[gray!50] (v8) -- (w8) -- (w7) -- cycle;
      \draw[thick] (w8) -- (w7);

      \draw[thick] (v1) \foreach \i in {2,...,\k} { -- (v\i) } -- cycle;
      \foreach \i in {1,...,\k} {
        \fill (v\i) circle (1pt);
      }
      \draw (v1) node[right]{\scriptsize 100};
      \draw (v2) node[right]{\scriptsize 101};
      \draw (v3) node[left]{\scriptsize 111};
      \draw (v4) node[left]{\scriptsize 110};
      \draw (v5) node[left]{\scriptsize 010};
      \draw (v6) node[left]{\scriptsize 011};
      \draw (v7) node[right]{\scriptsize 001};
      \draw (v8) node[right]{\scriptsize 000};
    \end{tikzpicture}
  \end{center}
  \caption{Illustration of Lemma~\ref{lem:main} for~$V=\{1,2,3\}$
    and $E=\{\{1,2\},\{1,3\}\}$. From left to right, the
    polygons~$\bar P_n^\emptyset\cap\bar P_n^{\{1,2\}}$, $\bar
    P_n^\emptyset\cap\bar P_n^{\{1,3\}}$, and $\bar P_G$ are depicted,
    where the shaded areas mark the triangles~$\Delta_\ell$ not
    belonging to the respective polygon. In~$\bar P_G$, the latter
    triangles correspond to the three dependent sets~$\{1,2\}$,
    $\{1,3\}$, and~$\{1,2,3\}$ in~$G$.}\label{fig:main}
\end{figure}
It is now easy to derive
\begin{theorem}\label{theo:vol}
  It is strongly \card P-hard to compute $\vol(\bar P_G)$ up to an
  additive error of less than
  $$\tfrac 14\tan(\tfrac\pi{2^n})(1-\cos(\tfrac\pi{2^{n-1}}))\;.$$
\end{theorem}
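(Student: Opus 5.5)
Lemma~\ref{lem:main} reduces the claim to a triangle count. Since $\bar P_G=\bigcap_{e\in E}(\bar P^\emptyset_n\cap\bar P^e_n)$, taking the intersection over all edges gives
\[
\bar P_G=\overline{Q_{2^n}\setminus\medcup_{\ell\in D}\Delta_\ell},\qquad D:=\{\ell\mid e\subseteq S_\ell\text{ for some }e\in E\}.
\]
By Lemma~\ref{lem:basic}, $\ell\mapsto S_\ell$ is a bijection between vertices of $Q_{2^n}$ and subsets of $V$, so $D$ corresponds exactly to the dependent sets of $G$. For two distinct indices, the triangles $\Delta_\ell$ and $\Delta_{\ell'}$ overlap only in a boundary point $w_\ell$ or not at all. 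Hence the cut-off pieces have disjoint interiors, and
\[
\vol(\bar P_G)=\vol(Q_{2^n})-|D|\cdot\vol(\Delta),
\]
where $\Delta$ is any $\Delta_\ell$; all of these are congruent by symmetry. Taking the closure does not affect the area.

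Next I compute $\vol(\Delta)$. Put $k=2^n$. The triangle has apex $v_\ell$ with $\|v_\ell\|=\sec(\pi/k)$, and base points $w_{\ell-1},w_\ell$ at distance $1$ from the origin, each at angle $\pi/k$ from $v_\ell$.
\begin{itemize}
\item The base $w_{\ell-1}w_\ell$ has length $2\sin(\pi/k)$.
\item The base lies at distance $\cos(\pi/k)$ from the origin, so the height from $v_\ell$ is $\sec(\pi/k)-\cos(\pi/k)=\sin^2(\pi/k)/\cos(\pi/k)$.
\item Therefore
\[
\vol(\Delta)=\sin(\tfrac\pi k)\cdot\frac{\sin^2(\tfrac\pi k)}{\cos(\tfrac\pi k)}=\tan(\tfrac\pi k)\sin^2(\tfrac\pi k)=\tfrac12\tan(\tfrac\pi{2^n})\bigl(1-\cos(\tfrac\pi{2^{n-1}})\bigr).
\end{itemize}
The error bound in the theorem is therefore exactly $\tfrac12\vol(\Delta)$.

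Now the reduction. Suppose an oracle returns a value $a$ with $|a-\vol(\bar P_G)|<\tfrac12\vol(\Delta)$. Then
\[
|D|=\frac{\vol(Q_{2^n})-\vol(\bar P_G)}{\vol(\Delta)}
\]
is the unique integer within distance $<\tfrac12$ of
\[
\frac{\vol(Q_{2^n})-a}{\vol(\Delta)}.
\]
The number of independent sets is then $2^n-|D|$. This is a Turing reduction from \card IS, which is \card P-complete.

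The main obstacle is making the reduction honest with respect to encoding and strong hardness. The extended formulation involves trigonometric values at angles $\pi/2^{i-1}$, which are not rational, so the model of input must be fixed. I would try first to treat the formulation as given by these algebraic numbers, or by rational approximations with polynomially many bits, since the tolerance $\tfrac12\vol(\Delta)$ leaves room of order $2^{-3n}$. To justify this, I would check that slightly perturbed coefficients move the area by less than this tolerance, using the continuity of the projected polytope in the data. The rounding step then needs $\vol(Q_{2^n})=2^n\tan(\pi/2^n)$ and $\vol(\Delta)$ only to about $O(n)$ bits of precision, which is computable in polynomial time.

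Strong hardness follows because the hardness comes from \card IS itself, which involves no numbers at all. The numbers in the constructed instance have encoding length polynomial in $n$, while their magnitudes stay bounded by a constant. So the problem remains hard even when numeric data are bounded polynomially in magnitude.
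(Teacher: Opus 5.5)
Your proposal is correct and is essentially the paper's proof: the removed triangles $\Delta_\ell$ correspond exactly to the dependent sets, so $\vol(\bar P_G)=\vol(Q_{2^n})-(2^n-n_{\text{IS}})\delta$, and an approximation with error less than $\tfrac12\delta$ (the stated tolerance) recovers $n_{\text{IS}}$ by rounding, with strong hardness inherited from the number-free problem \card IS. Your explicit computation of $\delta$ and your discussion of the irrational coefficients go beyond the paper, which states $\delta$ without derivation and does not address encoding at all; note, though, that bounded magnitude alone is not the usual criterion for strong hardness when the rational approximations need exponentially large denominators.
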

\begin{proof}
  The interiors of all triangles~$\Delta_\ell$ are pairwise disjoint
  and have the same area $\delta:=\tfrac
  12\tan(\tfrac\pi{2^n})(1-\cos(\tfrac\pi{2^{n-1}}))$. Moreover, by
  Lemma~\ref{lem:main}, the triangle $\Delta_\ell$ is missing
  in~$\bar P_G$ if and only if there exists an edge~$e$
  with~$e\subseteq S_\ell$, i.e., if and only if~$S_\ell$ is a
  dependent set in~$G$. We derive that $\vol(\bar
  P_G)=\vol(Q_{2^{n}})-(2^n-n_{\text{IS}})\delta$, where
  $n_{\text{IS}}$ is the number of independent sets in~$G$. Hence
  \[
  n_{\text{IS}}=2^n-\tfrac 1\delta(\vol(Q_{2^n})-\vol(\bar P_G))
  \]
  can be computed efficiently from $\vol(\bar P_G)$ if the latter is
  known up to an additive error of less than~$\tfrac
  12\delta$. However, it is strongly \card P-complete to
  compute~$n_{\text{IS}}$, even on graphs with maximal degree
  three~\cite[Theorem~4.2]{Dyer00}.
\end{proof}
\begin{lemma}\label{lem:prec}
  For~$n\ge 2$, we have $\delta>\tfrac 1{2^{3n-3}}$.
\end{lemma}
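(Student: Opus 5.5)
We need $\delta=\tfrac12\tan(\tfrac\pi{2^n})(1-\cos(\tfrac\pi{2^{n-1}}))>2^{-(3n-3)}$. The plan is to bound the two trigonometric factors from below separately, using elementary Taylor-type inequalities on $[0,\pi/2]$, and then multiply.

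For the tangent factor, use $\tan x\ge x$ for $x\in[0,\pi/2)$. This gives $\tan(\tfrac\pi{2^n})\ge \tfrac{\pi}{2^n}$.

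For the cosine factor, write $1-\cos(2x)=2\sin^2 x$ with $x=\tfrac\pi{2^n}\le\tfrac\pi4$. Jordan's inequality $\sin x\ge \tfrac{2}{\pi}x$ on $[0,\pi/2]$ then yields
\[
1-\cos(\tfrac\pi{2^{n-1}})=2\sin^2(\tfrac\pi{2^n})\ge 2\Big(\tfrac{2}{\pi}\cdot\tfrac{\pi}{2^n}\Big)^2=\tfrac{8}{2^{2n}}.
\]
Alternatively, $1-\cos y\ge \tfrac{y^2}{2}-\tfrac{y^4}{24}$ with $y=\tfrac{\pi}{2^{n-1}}\le\tfrac\pi2$ gives a sharper bound, but the cruder one suffices.

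Combining the two bounds,
\[
\delta\ge \tfrac12\cdot\tfrac{\pi}{2^n}\cdot\tfrac{8}{2^{2n}}=\tfrac{4\pi}{2^{3n}}=\tfrac{\pi}{2}\cdot\tfrac{1}{2^{3n-3}}>\tfrac{1}{2^{3n-3}}.
\]
The inequality is strict since $\pi/2>1$, and in fact both elementary inequalities are already strict for $x>0$.

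There is no real obstacle here; the only care needed is to check that the arguments lie in the range where the inequalities hold. Since $n\ge2$, we have $\tfrac\pi{2^n}\le\tfrac\pi4$ and $\tfrac\pi{2^{n-1}}\le\tfrac\pi2$, so both $\tan x\ge x$ and Jordan's inequality apply. The hypothesis $n\ge 2$ is used only for this.
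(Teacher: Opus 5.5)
Your proof is correct and follows essentially the same route as the paper, which also bounds $\tan(\tfrac\pi{2^n})$ and $1-\cos(\tfrac\pi{2^{n-1}})$ separately from below and multiplies. Your cosine bound $8/2^{2n}=2^{-(2n-3)}$ coincides with the paper's, and your tangent bound $\pi/2^n$ is slightly sharper than the paper's $2^{-(n-1)}$; this is the source of your extra factor $\pi/2$, whereas the paper gets strictness from the strict cosine inequality.
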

\begin{proof}
  Since~$\tfrac\pi{2^{n-1}}\in [0,\tfrac\pi 2]$, we
  obtain~$\tan(\tfrac\pi{2^n})\ge\tfrac 1{2^{n-1}}$
  and~$1-\cos(\tfrac\pi{2^{n-1}})> \tfrac 1{2^{2n-3}}$ and thus the
  desired lower bound.
\end{proof}
\begin{theorem}\label{theo:small_d}
  Problem~\eqref{prob:sslp} is strongly \card P-hard for~$d=1$
  and~$n_1=1$, even if only the right hand side~$h_yi$ is random.
\end{theorem}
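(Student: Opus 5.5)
Our plan is to reduce the area computation of Theorem~\ref{theo:vol} to evaluating $\E_\xi[Q(x,\xi)]$ with a one-dimensional uniform $\xi$. The basic identity is that for a two-dimensional polytope $\bar P_G$ contained in a box $[-r,r]^2$ (we may take $r=2$, since $\bar P_G\subseteq Q_{2^n}$ and $\|v_i\|=\sec(\tfrac\pi{2^n})\le\sqrt2$), we have
\[
\vol(\bar P_G)=\int_{-r}^{r}\big(u(t)-l(t)\big)\d t,\qquad u(t):=\max\{z_1\mid (t,z_1)\in\bar P_G\},\quad l(t):=\min\{z_1\mid (t,z_1)\in\bar P_G\},
\]
with the convention that the integrand is zero when the slice is empty. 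We will therefore take $\xi\sim{\cal U}[-r,r]$ and design a second-stage LP whose optimal value equals $-(u(\xi)-l(\xi))$, or a bounded surrogate of it. Then $\E_\xi[Q(x,\xi)]=-\vol(\bar P_G)/(2r)$. The randomness then enters only through the right-hand side, namely the constraint $y_1=\xi$.

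Concretely, the second stage contains two independent copies of the full extended formulation of $\bar P_G$. This means one copy of the variables of $P^e_n$ for each $e\in E\cup\{\emptyset\}$, with all copies linked by common $(y_1,z_1)$, and this is done once for an ``upper'' point $(\xi,z^+)$ and once for a ``lower'' point $(\xi,z^-)$. The objective is $\min\, z^--z^+$. The total size is polynomial in $n$ and $|E|$.

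Several technical issues must be handled. \emph{First}, infeasibility: for $|\xi|$ outside the projection the LP would be infeasible, giving $Q=+\infty$. To avoid this, we choose $[l,u]$ equal to the exact $y$-range of $\bar P_G$. That range is known: $\bar P_G$ still contains all $w_\ell$, including $\pm(1,0)$, and it lies within $Q_{2^n}$, whose $y$-extent is $[-1,1]$ because $w_0$ has angle $\pi/2^{n-1}$. Alternatively, we will add slack variables with a large penalty, which yields an additive correction computable in closed form. I would first try choosing the interval $[-1,1]$ and verifying that every slice is nonempty by convexity.

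\emph{Second}, standard form: we convert inequalities with slack variables, and free variables by splitting them into $y^+-y^-$. We also add a dummy first-stage variable $x\in[0,1]$ with $c=0$ and $T_\xi=0$, so that $n_1=1$ and the first stage is trivial; the optimal value of \eqref{prob:sslp} then equals $\E_\xi[Q]$. \emph{Third}, the coefficients $\cos(\pi/2^{i-1})$ and $\sin(\pi/2^{i-1})$ are irrational. We replace them by rationals with polynomially many bits and argue that the resulting perturbation of the area stays below $\tfrac12\delta$. By Lemma~\ref{lem:prec}, a polynomial number of bits of precision suffices. Since the numbers stay polynomially encoded, strong hardness is preserved.

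The main obstacle is the perturbation argument. We must show that rounding the rotation matrices, which are then no longer exactly orthogonal, changes the area of the projected polytope by less than $2^{-3n+2}$. I would handle it by noting that each constraint set in the chain of Proposition~\ref{prop:proj} changes by a relative error $\epsilon$ per step. Through $n$ steps this accumulates to Hausdorff distance $O(n\epsilon)$ between the true and perturbed projections, so the area error is at most perimeter times $O(n\epsilon)$, which is below $\tfrac12\delta$ for $\epsilon=2^{-4n}$. Combined with Theorem~\ref{theo:vol}, computing the optimal value exactly, or even to within $\tfrac14\delta/(2r)$, yields $n_{\text{IS}}$.
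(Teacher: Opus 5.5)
Your core reduction is sound and a bit more direct than the paper's. Two linked copies of the extended formulation of $\bar P_G$ in one recourse problem, with objective $z^--z^+$ and $\xi\sim\mathcal U[-1,1]$, give $\E_\xi[Q]=-\tfrac12\vol(\bar P_G)$ at once. The paper instead writes $\vol(\bar P_G)$ as $2\E_\xi[\max\{z_1\mid(\xi,z_1)\in\bar P_G\}]$ minus the analogous minimum term and treats each term separately. The nonemptiness of all slices, via convexity and $(\pm1,0)\in\bar P_G$, is argued the same way in both.

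The substantive difference is the first stage. Yours is vacuous ($c=0$, $T=0$), so every $x$ is optimal and you only show that computing the optimal \emph{value} is hard. An oracle that returns only an optimal first-stage decision would tell you nothing. The paper makes the single first-stage variable carry the information. It minimizes an objective linear in $x\in[0,1]$ whose slope is the unknown expectation minus $c$. From the optimal $x$ it reads off on which side of $c$ the expectation lies, and it bisects over $c\in[\tfrac12,1]$ with $3n-3$ calls by Lemma~\ref{lem:prec}. This is what gives the statement content for $n_1=1$. In fairness, your route avoids the paper's rescaling step, which needs more care than it gets there. Multiplying all right-hand sides by $x$ produces slices of $x\bar P_G$ at $y_1=\xi$, which are empty for $|\xi|>x$; these are not $x$ times slices of $\bar P_G$.

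The genuine gaps are in the rounding step, which the paper does not perform; it works with the exact coefficients $\cos(\tfrac\pi{2^{i-1}})$, $\sin(\tfrac\pi{2^{i-1}})$, all of absolute value at most one.

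First, rounding breaks your feasibility argument. The points $(\pm1,0)$ lie on facets of $Q_{2^n}=\bar P^\emptyset_n$, and the whole inequality chain of Proposition~\ref{prop:proj} is tight along any of their lifts. So nothing guarantees that the rounded polytope still reaches $y_1=\pm1$. If its $y_1$-range lies strictly inside $[-1,1]$, the recourse problem is infeasible for $\xi$ in a set of positive measure, and then $\E_\xi[Q]=+\infty$. You could take $[l,u]$ to be the $y_1$-range of the rounded polytope, computable by two LPs, or relax the final right-hand sides so that the rounded polytope contains $\bar P_G$. Either way, this has to be carried through the error estimate.

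Second, the $O(n\epsilon)$ Hausdorff bound is only a heuristic. The rounded matrices are not orthogonal, so the chain of Proposition~\ref{prop:proj} does not carry over verbatim. Moreover, closeness of the individual projections transfers to their intersection $\bar P_G$ only through an interior-point argument; the exact sets all contain the disk of radius $\cos(\tfrac\pi{2^n})$ around the origin, and you must use this explicitly.

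Third, your justification that strong hardness survives because the numbers are polynomially encoded is not valid. Strong hardness refers to numbers of polynomially bounded magnitude, i.e., unary encoding. Coefficients with denominators of order $2^{4n}$ become exponentially large integers once denominators are cleared. So even after repairing the first two points, the rounding argument as you set it up yields only weak \#P-hardness.
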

\begin{proof}
  Since~$\bar P_G$ is convex and contains both~$(-1,0)$ and~$(1,0)$,
  we have
  \begin{eqnarray*}
    \vol(\bar P_G)
    & = & \textstyle \int_{-1}^1 (\max\{z_1\mid (y_1,z_1)\in \bar P_G\}-\min\{z_1\mid (y_1,z_1)\in \bar P_G\}) \d y_1\\
    & = & 2\E_\xi[\max\{z_1\mid (\xi,z_1)\in \bar P_G\}]-2\E_\xi[\min\{z_1\mid (\xi,z_1)\in \bar P_G\}]
  \end{eqnarray*}
  with~$\xi\sim{\cal U}[-1,1]$. In view of Theorem~\ref{theo:vol}, it
  remains to show that the computation of $\E_\xi[\max\{z_1\mid
    (\xi,z_1)\in \bar P_G\}]$ can be polynomially reduced
  to~\eqref{prob:sslp} with~$n_1=1$; the min-case is analogous.  For
  this, similar to~\cite{Hanasusanto16}, we consider the problem of
  minimizing
  \[
  -cx+\E_\xi[\max\{z_1\mid (\xi,z_1)\in x\bar
    P_G\}]=x(\E_\xi[\max\{z_1\mid (\xi,z_1)\in \bar P_G\}]-c)
  \]
  over
  the single first-stage variable~$x\in [0,1]$. This is a problem of
  type~\eqref{prob:sslp}. Indeed, an extended formulation of~$x\bar
  P_G$ is obtained by multiplying all right-hand sides
  in~$P_n^\emptyset$ and in all $P_n^e$ with~$x$, so that the former
  right hand side now takes the role of the matrix~$T$, which is
  fixed. Moreover, the former variable~$y_1$, now replaced by~$\xi$,
  moves to the right hand side.
  
  If the optimum is attained at~$x=0$, we derive~$\E_\xi[\max\{z_1\mid
    (\xi,z_1)\in \bar P_G\}]\ge c$, otherwise~$\E_\xi[\max\{z_1\mid
    (\xi,z_1)\in \bar P_G\}]\le c$. Using a standard bisection
  approach for~$c$ in the range~$[\tfrac 12,1]$, the computation of
  the value~$\E_\xi[\max\{z_1\mid (\xi,z_1)\in \bar P_G\}]$ can thus
  be reduced to at most~$3n-3$ calls to~\eqref{prob:sslp} by
  Lemma~\ref{lem:prec}.
\end{proof}
The statement of Theorem~\ref{theo:small_d} also holds if, instead of
the right hand side~$h_\xi$, only the objective~$q_\xi$ is random. For
this, it suffices to dualize the given extended formulation
of~$(\xi,z_1)\in \bar P_G$ and scale by~$x$ afterwards.

\section{Fixed size of the second-stage problem}\label{sec:fixed_mn}

We next prove that bounding the number of variables and constraints in
the second stage is not sufficient to obtain a tractable special
case of~\eqref{prob:sslp} either. This can be shown by similar techniques as
employed in~\cite{Hanasusanto16}, although in the reduction presented
there, the second stage is not bounded and only weak \card P-hardness
is shown for the case of fixed recourse. We first fix only the number
of constraints~$m_2$.
\begin{theorem}\label{theo:small_m2}
  Problem~\eqref{prob:sslp} is strongly \card P-hard if~$n_1=1$
  and~$m_2=1$, even if only the objective~$q_\xi$ is random.
\end{theorem}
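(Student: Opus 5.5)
The approach is a reduction from counting linear extensions of a partial order, which is \card P-complete by Brightwell and Winkler. Since the order polytope $O_\prec=\{\xi\in[0,1]^d\mid \xi_i\le \xi_j \text{ for all } i\prec j\}$ has only $0,\pm1$ coefficients, the reduction should give hardness in the strong sense. Recall that $\vol(O_\prec)=e(\prec)/d!$, where $e(\prec)$ is the number of linear extensions. I will use $\xi\sim{\cal U}[0,1]^d$, with $d$ unbounded here, unlike in Theorem~\ref{theo:small_d}.

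\emph{Second stage.} Take $m_2=1$, $W=(1,\dots,1)$, $h_\xi=0$ and $T=-1$, so that the constraint reads $\sum_k y_k=x$ with $y\ge 0$. The cost vector $q_\xi$ has one entry equal to the constant $0$ and one entry $\xi_j-\xi_i+s$ for each relation $i\prec j$, where $s\ge0$ is a small rational shift. This gives
\[
Q(x,\xi)=x\cdot\min\{0,\ \min_{i\prec j}(\xi_j-\xi_i+s)\},
\]
so that $x\,\psi(s)$ is the expected second-stage cost, with $\psi(s):=\E_\xi[\min\{0,m(\xi)+s\}]$ and $m(\xi):=\min_{i\prec j}(\xi_j-\xi_i)$. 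Only $q_\xi$ is random, $W$ is fixed, and $n_1=1$. For the first stage I minimize $-cx+x\psi(s)$ over $x\in[0,1]$. Exactly as in the proof of Theorem~\ref{theo:small_d}, bisection on $c$ then computes $\psi(s)$ to any precision $2^{-p}$ using $O(p)$ oracle calls. Only $c$ carries many bits, while the instance data stay polynomially bounded, so strong hardness is preserved.

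\emph{Recovering $e(\prec)$.} Since $\psi'(s)=\Pr[m(\xi)+s<0]$, we get $\psi(s)=\psi(0)+\int_0^s\Pr[m<-t]\,\d t$ and $\lim_{s\downarrow0}\psi'(s)=1-\vol(O_\prec)$. Rather than a limit, I want an exact identity. The set $\{\xi\in[0,1]^d\mid m(\xi)\ge -t\}$ is a polytope whose facet normals lie in a fixed finite set and whose right-hand sides are affine in $t$. Hence its volume is a polynomial in $t$ of degree at most $d$ on some interval $[0,t_0]$ on which the combinatorial type does not change. I claim that $t_0=1/d$ or some similar explicit polynomially-sized bound suffices. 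Consequently, $\psi$ agrees on $[0,t_0]$ with a polynomial of degree at most $d+1$. I will evaluate $\psi$ at the $d+2$ points $s_k=k\,t_0/(d+1)$, $k=0,\dots,d+1$, all of polynomial bit size, and obtain the derivative at $0$ by Lagrange interpolation. The interpolation weights are rationals with denominators at most about $(d+1)!$ times powers of $t_0$. So it suffices to know each $\psi(s_k)$ to within $2^{-\mathrm{poly}(d)}$, which needs polynomially many bisection steps, in order to determine $1-e(\prec)/d!$ to an error below $1/(2\,d!)$ and then round to get $e(\prec)$.

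\emph{Main obstacle.} The hard step is justifying that $t\mapsto\vol\{m\ge -t\}$ is a single polynomial on an explicit interval $[0,t_0]$ with $t_0$ not exponentially small. The arrangement $\{\xi_j-\xi_i=-t\}$ intersected with the cube could in principle change its combinatorics at many values of $t$. My first approach is to decompose the cube by the $d!$ orderings of $\xi$ and, within each ordering simplex, to write the region explicitly via gap variables $g_r=\xi_{\pi(r+1)}-\xi_{\pi(r)}\ge0$ and $\sum_r g_r\le1$. In these variables the conditions become $\sum_{r\in I}g_r\le t$ for certain consecutive intervals $I$, which gives a polynomial in $t$ as long as $t\le 1/d$ so that the upper constraint never interacts. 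If this proves delicate, the fallback is to replace interpolation by a finite-difference estimate $(\psi(2s)-\psi(s))/s$ with $s=2^{-\mathrm{poly}(d)}$. Such an $s$ still has polynomial encoding length, but its numeric value is exponentially small, so this fallback would only give weak hardness. For the strong statement I would therefore insist on the exact polynomiality argument.
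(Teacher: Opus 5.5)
Your proposal is correct, and its skeleton is the paper's. The second stage is $\min q_\xi^\top y$ over the scaled simplex $\{y\ge 0,\ e^\top y=x\}$, so that $Q=x\cdot\min\{0,\dots\}$. The expectation is differentiated in a scalar shift to produce a volume, the resulting polynomial is interpolated at $d+2$ nodes, and the first stage uses bisection on $c$. Your $Q$ is exactly the negative of the paper's $Q_t$ when $A$ has rows $e_i-e_j$ for $i\prec j$, $b=0$ and $t=s$.

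You differ from the paper in two places.
\begin{itemize}
\item[(1)] \emph{Source problem.} The paper reduces from volume computation for arbitrary integer polytopes (Dyer--Frieze). You reduce from counting linear extensions via order polytopes (Brightwell--Winkler).
\item[(2)] \emph{Polynomiality step.} The paper invokes Lemma~\ref{lem:polynomial} to get that $\vol(P_t)$ is one polynomial on all of $[0,1]$. You prove polynomiality only on $[0,1/d]$, using gap variables.
\end{itemize}

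Your caution in (2) is justified, because Lemma~\ref{lem:polynomial} is false in general.
\begin{itemize}
\item For $d=1$ with constraints $\xi\le t$ and $3\xi\le 1+t$ (so $b=(0,1)$, $b'=e$), the volume is $\min\{t,\tfrac{1+t}{3}\}$, which has a kink at $t=\tfrac12$.
\item It fails even for difference constraints. For the chain $1\prec 2\prec 3$ encoded by its two covering relations only, the volume is $1-(1-t)^2+\tfrac16(1-2t)_+^3$, with a breakpoint at $t=\tfrac12=\tfrac1{d-1}$.
\end{itemize}
So an interval of length about $1/d$ is the right order of magnitude.

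What each route buys:
\begin{itemize}
\item The paper's route is more general. It handles any integer polytope, and in the single-inequality case breakpoints really are integral, which yields Theorem~\ref{theo:small_n2}. But for general $A$ it needs a shorter interval, whose length is governed by subdeterminants of $A$.
\item Your route gives an explicit interval and interpolation nodes $k/(d(d+1))$ with polynomially bounded numbers, which is what the strong-sense claim needs.
\end{itemize}
The bisection on a high-precision $c$ is the same device the paper uses, so on that point your strong-sense conclusion stands on the same footing as the paper's.

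One step of your sketch needs tightening. It is not true that the cube's upper constraint ``never interacts'': it does constrain the gaps not covered by any violated relation. The correct argument works within each ordering simplex in coordinates $(\xi_{\pi(1)},g)$:
\begin{itemize}
\item Integrate out $\xi_{\pi(1)}$ and the uncovered gaps. This gives $(1-S)^k/k!$, where $S$ is the sum of the covered gaps and $k$ is one plus the number of uncovered gaps.
\item The covered gaps range over $t\cdot C$, where $C=\{h\ge 0\mid \sum_{r\in I}h_r\le 1 \text{ for all violated } I\}$ is bounded. Hence $S\le (d-1)t\le 1$, so no truncation $(\cdot)_+$ occurs.
\item Substituting $g=th$ shows the integral is a polynomial in $t$ of degree at most $d$.
\end{itemize}
This holds for $t\le 1/(d-1)$, and summing over the $d!$ orderings completes the claim.
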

\begin{proof}
  Let~$P=\{\xi\in [0,1]^d\mid A\xi\le b\}$ be any non-empty polytope
  with~$A\in\Z^{m\times d}$ and~$b\in\Z^m$, for any~$m\in\N$. Then
  computing the volume of~$P$ is strongly \card
  P-hard~\cite{Dyer88}. For~$x\in[0,1]$ and~$t\in\R$, we consider the
  following second-stage problem
  \begin{align*}
    Q_{t}(x,\xi) \; = \; \max \quad & (A\xi-b-te)^\top y\\
    \st \quad & y\ge 0,\quad e^\top y\le x\;,
  \end{align*}
  where~$e=(1,\dots,1)^\top\in\R^m$. Transformed into standard form,
  we thus have~$m_2=1$ and~$n_2=m+1$.  Then
  \[
  Q_{t}(x,\xi)=\max\,\{0,\max\,\{a_j \xi-b_j\mid j=1,\dots,m\}-t\}\cdot x\;,
  \]
  where~$a_j$ denotes the $j$th row vector of~$A$, and
  \[
  \tfrac\partial{\partial t} Q_{t}(x,\xi)=\left\{\begin{array}{cl}
  0 & \text{if }A\xi< b+te \\
  -x & \text{if }a_j\xi > b_j+t\text{ for some }j\in\{1,\dots,m\}
  \end{array}\right.
  \]
  Hence, using the Leibniz integral rule and assuming~$\xi\sim{\cal
    U}[0,1]^d$,
  \[
  \tfrac\partial{\partial t} \E_\xi[Q_{t}(x,\xi)]
  = \E_\xi[\tfrac\partial{\partial t} Q_{t}(x,\xi)]
  = (\vol(P_t)-1)\cdot x\;,
  \]
  where~$P_t=\{\xi\in [0,1]^d\mid A\xi\le b+te\}$. Now
  define~$p(t):=\E_\xi[Q_t(x,\xi)]$. By Lemma~\ref{lem:polynomial} in
  Appendix~\ref{app}, the derivative~$p'(t)=(\vol(P_t)-1)x$ is a
  polynomial of degree at most~$d$ in the range~$[0,1]$. Hence~$p$ is a
  polynomial of degree at most~$d+1$ in the same range, and the~$d+2$
  coefficients of~$p$ can be obtained by evaluating~$p$ at~$d+2$
  different points in~$[0,1]$; the derivative~$p'$ can then be
  determined symbolically.  Altogether, for~$x>0$, the computation
  of~$\vol(P)$ can be polynomially reduced to the computation
  of~$\E_\xi[Q_{t}(x,\xi)]$ for different values of~$t\in\R$, so that
  the latter task is strongly \card P-hard. Arguing analogously to the
  last part of the proof of Theorem~\ref{theo:small_d}, it follows
  that~\eqref{prob:sslp} is strongly \card P-hard for~$n_1=m_2=1$.
\end{proof}
If the polytope~$P$ in the proof of Theorem~\ref{theo:small_m2} is
defined by only one linear inequality, it is still (weakly) \card
P-hard to compute the volume of~$P$~\cite{Dyer88}. We thus obtain
\begin{theorem}\label{theo:small_n2}
  Problem~\eqref{prob:sslp} is \card P-hard if~$n_1=1$,~$n_2=2$,
  and~$m_2=1$, even if only the objective~$q_\xi$ is random.
\end{theorem}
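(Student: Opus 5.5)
We reuse the proof of Theorem~\ref{theo:small_m2} with $m=1$, so that the polytope is $P=\{\xi\in[0,1]^d\mid a^\top\xi\le b\}$ for a single integer row $a\in\Z^d$ and $b\in\Z$. Computing $\vol(P)$ for such knapsack-type polytopes is \card P-hard~\cite{Dyer88}, though only in the weak sense, since the hardness comes from large numbers in $a$ and $b$. For $x\in[0,1]$ and $t\in\R$, the second-stage problem becomes
\[
Q_t(x,\xi)=\max\,\{(a^\top\xi-b-t)\,y \mid 0\le y\le x\}\;.
\]
Adding a slack variable $s\ge 0$ turns this into standard form with constraint $y+s=x$, so that $n_2=2$ and $m_2=1$. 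The recourse matrix $W=(1\;1)$ is fixed, $T=-1$ is fixed, and only the objective coefficient of $y$ depends (affinely) on $\xi$. Then
\[
Q_t(x,\xi)=\max\{0,a^\top\xi-b-t\}\cdot x\;.
\]

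As in the proof of Theorem~\ref{theo:small_m2}, the Leibniz rule gives
\[
\tfrac{\partial}{\partial t}\E_\xi[Q_t(x,\xi)]=(\vol(P_t)-1)\,x\;,
\qquad P_t=\{\xi\in[0,1]^d\mid a^\top\xi\le b+t\}\;.
\]
The derivative exists almost everywhere, because the hyperplane $a^\top\xi=b+t$ has measure zero for $a\neq 0$. By Lemma~\ref{lem:polynomial}, $p(t)=\E_\xi[Q_t(x,\xi)]$ is a polynomial of degree at most $d+1$ on $[0,1]$. We therefore evaluate $p$ at $d+2$ points, interpolate, differentiate symbolically at $t=0$, and read off $\vol(P)$. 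This step introduces no new ideas.

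The last step converts these evaluations into calls to \eqref{prob:sslp} with $n_1=1$, following the end of the proof of Theorem~\ref{theo:small_d}. For a parameter $c$, we minimize $-cx+\E_\xi[Q_t(x,\xi)]=x(p_1(t)-c)$ over $x\in[0,1]$, where $p_1$ denotes $p$ at $x=1$. The sign of the optimal $x$ then reveals whether $p_1(t)\ge c$, and bisection on $c$ determines $p_1(t)$.

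This is the main obstacle: the bisection must reach enough precision for the interpolation to recover $\vol(P)$ exactly. Since $\vol(P)$ is a rational number whose denominator is bounded by $d!\prod_j|a_j|$, it has polynomial encoding length. The values $p_1(t)$ at rational points $t$ with small denominators are likewise rationals of polynomial bit size. Hence polynomially many bisection steps suffice to pin each value down exactly, for instance by rounding to the nearest rational with bounded denominator via continued fractions. Because $a$ and $b$ may be exponentially large in value, this yields only weak \card P-hardness, which is what the statement claims.
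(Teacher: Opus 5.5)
Your proposal is correct and follows the paper's route: the paper proves this theorem by specializing the construction in the proof of Theorem~\ref{theo:small_m2} to a single inequality ($m=1$, so $n_2=m+1=2$ and $m_2=1$) and citing the weak \card P-hardness of computing the volume of a knapsack polytope~\cite{Dyer88}. Your explicit standard-form check and your precision argument fill in details that the paper leaves implicit when it says ``arguing analogously'': the bounded denominators of $\vol(P)$ and of $p_1(t)$ mean that polynomially many bisection steps, followed by continued-fraction rounding, recover each $p_1(t)$ exactly.
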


Comparing the results of Theorem~\ref{theo:small_m2} and
Theorem~\ref{theo:small_n2}, it remains to clarify whether
fixing~$n_2$ (and thus also~$m_2$) leads to a strongly or weakly \card
P-hard problem. The latter is the case.
\begin{theorem}\label{theo:pseudo}
  Let~$n_2$ be fixed. Then~\eqref{prob:sslp} can be solved up to any
  desired accuracy~$\varepsilon>0$ in pseudopolynomial time (and in
  time logarithmic in~$1/\varepsilon$).
\end{theorem}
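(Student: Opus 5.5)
With $n_2$ fixed (so also $m_2\le n_2$ may be assumed, since redundant equality rows can be removed), I would use the structure of the second-stage problem through its bases. The second-stage LP $\min\{q_\xi^\top y\mid Wy=h_\xi-T_\xi x,\ y\ge 0\}$ has at most $\binom{n_2}{m_2}\le 2^{n_2}$ bases of the fixed matrix $W$, a constant number. For fixed $x$, the value $Q(x,\xi)$ is piecewise given by optimal bases. For each basis $B$, the basic solution $y_B=W_B^{-1}(h_\xi-T_\xi x)$ and its cost $q_{\xi,B}^\top W_B^{-1}(h_\xi-T_\xi x)$ are polynomial (at most quadratic) in $\xi$. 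The regions of $\xi$ where $B$ is primal and dual feasible are described by $n_2$ linear-in-$\xi$ inequalities of each kind, since $x$ is fixed. The key point is that the total number of linear inequalities describing each region is bounded by a constant, independent of $d$.

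The first main step is evaluating $\E_\xi[Q(x,\xi)]$ for fixed $x$. I would decompose $[l,u]$ into the cells where a particular basis is optimal, using a fixed tie-breaking rule so that the cells are disjoint up to measure zero. Each cell is the box intersected with a constant number $k$ of halfspaces, and on it $Q$ is a low-degree polynomial. Integrating a polynomial over a box cut by a constant number of halfspaces can be done in pseudopolynomial time.

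For this I would generalize the dynamic-programming approach used for knapsack-type volume computations (as in Dyer--Frieze and the counting version of knapsack). After scaling $\xi$ to $[0,1]^d$ and scaling coefficients to integers, I would use inclusion--exclusion over vertices of the box, as in Lawrence's formula or the standard formula $\vol\{\xi\in[0,1]^d\mid a^\top\xi\le b\}=\frac1{d!\prod a_i}\sum_{S}(-1)^{|S|}(b-a(S))_+^d$. Alternatively, and more robustly, I would use a DP over the coordinates $i=1,\dots,d$ that tracks the partial sums $\sum_{j\le i}a^r_j\xi_j$ for each of the $k$ constraints. Each of these takes pseudopolynomially many integer values when discretized. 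Since $k$ is constant, the state space is pseudopolynomial, and the one-dimensional convolutions with uniform densities produce piecewise polynomials with pseudopolynomially many pieces. Multiplying by the quadratic integrand only raises degrees by a constant. This gives the exact value of $\E_\xi[Q(x,\xi)]$, or one to accuracy $\varepsilon$, in pseudopolynomial time. Irrational or real data would first be rounded, which accounts for the dependence on $\log(1/\varepsilon)$.

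The second step is optimizing over $x$. The function $f(x)=c^\top x+\E_\xi[Q(x,\xi)]$ is convex on the nonempty bounded polytope $\{Ax\le b\}$. I would assume relatively complete recourse or treat infeasibility as $+\infty$, noting the feasibility test is also a constant-size system. Subgradients can be obtained in the same DP by integrating the dual solution $-T_\xi^\top\pi_B(\xi)$ over the cells. The ellipsoid method, or any polynomial-time convex optimization method with a first-order oracle, then gives an $\varepsilon$-optimal solution using a number of oracle calls polynomial in the input size and $\log(1/\varepsilon)$. Lipschitz and boundedness constants are polynomially bounded in the input.

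The main obstacle I expect is the first step: proving that the DP handles a constant number of simultaneous halfspace constraints together with polynomial integrands in pseudopolynomial time. This requires controlling the encoding length of the piecewise polynomials and handling the case where $T_\xi$ and $q_\xi$ make the cell inequalities' coefficients depend on $x$. That dependence is fine once $x$ is fixed and rounded to polynomial bit length.
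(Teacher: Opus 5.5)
Your proposal follows essentially the paper's route. Both arguments rest on the same ingredients:
\begin{itemize}
\item a constant number of bases of $W$;
\item optimality regions $P_\B$, each the box cut by at most $n_2$ halfspaces, with a quadratic value function on each;
\item a pseudopolynomial coordinate-by-coordinate dynamic program for the integrals;
\item convexity together with Gr\"otschel--Lov\'asz--Schrijver for the outer minimization.
\end{itemize}
The DP is the paper's Lemma~\ref{lem:pseudo_vol}. For the quadratic integrand, Lemma~\ref{lem:quad} turns each moment into the volume of a lifted polytope in dimension $d+2$, whereas you weight the DP directly. The paper only evaluates $\E_\xi[Q(x,\xi)]$, so your subgradient computation is optional.

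\textbf{Tie-breaking step.} This step needs repair. With a rule such as choosing the lowest-index optimal basis, the cell of $\B_j$ is $P_{\B_j}\setminus\bigcup_{i<j}P_{\B_i}$. This set is not a box cut by constantly many halfspaces, as you claim. Moreover, the overlaps can have positive measure under primal or dual degeneracy, e.g.\ two identical columns of $W$ with equal costs. The paper handles exactly this point: it groups bases with identical value functions, so that different groups overlap only on null sets, and then applies inclusion--exclusion within each group. You have two easy fixes:
\begin{itemize}
\item apply inclusion--exclusion over all bases, which is valid because all bases optimal at $\xi$ give the same value there;
\item split each set difference into $O(1)$ polytopes with some inequalities reversed.
\end{itemize}
Both remain constant-size because $n_2$ is fixed.

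\textbf{Dependence on $x$.} Your claim that the $x$-dependence of the cell inequalities is harmless once $x$ has polynomial bit length does not hold for a pseudopolynomial DP. Write $h_\xi=h_0+\sum_k\xi_kh_k$ and $T_\xi=T_0+\sum_k\xi_kT_k$. Then the coefficient of $\xi_k$ in the primal feasibility inequalities is $W_\B^{-1}(h_k-T_kx)$. The DP's state space grows with the magnitude of these coefficients after scaling to integers. Clearing the denominators of such an $x$ makes them exponentially large whenever $T_\xi$ is random. (Random $q_\xi$ does not cause this, since the dual feasibility conditions do not involve $x$.) The paper's proof is silent on this as well, because Lemma~\ref{lem:quad} needs an integral constraint matrix of small norm. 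So this is a caveat you share with the paper, not a deviation from it. When only $h_\xi$ and $q_\xi$ are random, $x$ enters only the right-hand sides and the integrand, and the argument goes through.
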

\begin{proof}
  It is easy to verify that $\E_\xi[Q(x,\xi)]$ is a convex function
  in~$x$, thus by Grötschel et al.~\cite[Theorem~4.3.13]{Grotschel93}
  -- and due to our assumption that the first-stage feasible region is
  non-empty and bounded -- it suffices to show that~$\E_\xi[Q(x,\xi)]$
  can be computed in pseudopolynomial time for any given~$x\in\R^n$.

  Let~$B$ be the set of bases of~$W$. Since~$n_2$ is fixed and~$m_2\le
  n_2$, also~$|B|\le n_2^{m_2}$ is bounded. For each~$\B\in B$,
  let~$P_\B$ denote the set of all~$\xi\in[l,u]^d$ such that~$\B$ is
  feasible and optimal for the given~$x$, i.e.,
  \[
  P_\B:=\{\xi\in [l,u]^d\mid W_\B^{-1}(h_\xi-T_\xi x)\ge
  0,~(q_\xi)_\NB-W_\NB^\top (W_\B^{-1})^\top(q_\xi)_\B\ge 0\}\;.
  \]
  Each~$P_\B$ is a polytope defined by intersecting
  the box~$[l,u]^d$ with~$n_2$ many halfspaces.
  
  Next, we partition~$B$ into equivalence
  classes~$B^{(1)},\dots,B^{(\ell)}$ with respect to the quadratic
  function $(q_\xi)_{\B}^\top W_{\B}^{-1}(h_\xi-T_\xi x)$ that
  describes the objective value of the basic solution corresponding
  to~$\B$ as a function of~$\xi$. More precisely, two bases are
  equivalent if this function agrees on~$\R^d$; we denote the latter
  by~$q^{(r)}$ for the class~$B^{(r)}$. Two bases~$\B_1$ and~$\B_2$
  belonging to different classes are simultaneously optimal on a zero
  set, so that~$\vol(P_{\B_1}\cap P_{\B_2})=0$. In particular,
  setting~$P^{(r)}:=\cup_{\B\in B^{(r)}}P_{\B}$, we obtain
  \[
  \E_\xi[Q(x,\xi)] = \E_\xi\big[\max_{\B\in B}\;(q_\xi)_\B^\top W_\B^{-1}(h_\xi-T_\xi x)\big]
  = \Big(\prod_{i=1}^d\tfrac 1{u_i-l_i}\Big)\sum_{r=1}^\ell \int_{P^{(r)}} q^{(r)}(\xi) \d\xi\;.
  \]
  By the inclusion-exclusion principle, we have
  \begin{equation}
    \int_{P^{(r)}} q^{(r)}(\xi) \d\xi
    = \sum_{\emptyset\neq B'\subseteq B^{(r)}}(-1)^{|B'|+1}
    \int_{\cap_{\B\in B'} P_{\B}} q^{(r)}(\xi)\d\xi\;.\label{eq:incl_excl}\tag{*}
  \end{equation}\\[-1.5ex]
  Since~$|B^{(r)}|\le |B|\le n_2^{m_2}$, this sum ranges over at
  most~$2^{n_2^{m_2}}-1$ many subsets and each
  intersection~$\cap_{\B\in B'} P_{\B}$ is a polytope defined by box
  constraints and at most~$n_2^{m_2+1}$ linear inequalities. All
  integrals in~\eqref{eq:incl_excl} can thus be computed in
  pseudopolynomial time by Lemma~\ref{lem:quad} in
  Appendix~\ref{app}. This yields the desired result.
\end{proof}

\section{Fixing both sizes}\label{sec:fixed_both}

We finally discuss the case where both the dimension~$d$ of the random
vector and the number~$m_2$ of second-stage constraints are fixed.
\begin{theorem}\label{theo:both}
  Let~$d$ and~$m_2$ be fixed. Then~\eqref{prob:sslp} can be solved up
  to any desired accuracy~$\varepsilon>0$ in polynomial time (and in
  time logarithmic in~$1/\varepsilon$).
\end{theorem}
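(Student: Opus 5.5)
As in the proof of Theorem~\ref{theo:pseudo}, we use convexity of $\E_\xi[Q(x,\xi)]$ in~$x$ together with \cite[Theorem~4.3.13]{Grotschel93}. This reduces the task to evaluating $\E_\xi[Q(x,\xi)]$ exactly, or to sufficient accuracy, in polynomial time for a given~$x$. The bottleneck in Theorem~\ref{theo:pseudo} was the inclusion–exclusion over sets of bases, which is exponential in the number of bases; with $n_2$ unbounded there are up to $\binom{n_2}{m_2}$ bases. Since $m_2$ is fixed, this number is only polynomial, namely $O(n_2^{m_2})$. We therefore replace inclusion–exclusion by an explicit polynomial-size cell decomposition of the box $[l,u]$, exploiting that $d$ is fixed.

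\textbf{Step 1: the relevant hyperplanes.} For each basis $\B$ of~$W$ (polynomially many), the feasibility and optimality conditions defining $P_\B$ are affine in~$\xi$. They are $m_2$ primal inequalities $W_\B^{-1}(h_\xi-T_\xi x)\ge 0$ and $n_2-m_2$ reduced-cost inequalities. Collect all hyperplanes bounding these halfspaces over all bases, together with the $2d$ box facets. This gives a polynomial number $N$ of affine hyperplanes in~$\R^d$. On each cell of the arrangement they induce, the sign pattern of every defining inequality is constant. Hence the set of bases that are feasible and optimal is constant on every open full-dimensional cell. On such a cell, $Q(x,\cdot)$ equals the quadratic $(q_\xi)_\B^\top W_\B^{-1}(h_\xi-T_\xi x)$ for any basis~$\B$ optimal there. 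If no basis is primal feasible, the cell has $Q=+\infty$ and the problem is infeasible for that~$x$. If some basis is feasible but none is optimal, then $Q=-\infty$ there. Both cases are detectable and handled separately, or excluded by the usual complete-recourse assumption.

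\textbf{Step 2: enumerate cells and integrate.} An arrangement of $N$ hyperplanes in fixed dimension~$d$ has $O(N^d)$ cells. These cells can be enumerated, each described as a polytope by its inequalities, in polynomial time by standard algorithms (e.g.\ incremental construction, or a vertex enumeration via triangulation). For each full-dimensional cell~$C$:
\begin{itemize}
\item pick an interior point (for instance a Chebyshev center, computed by one LP);
\item find a basis optimal at that point by solving the second-stage LP;
\item integrate the corresponding quadratic over~$C$.
\end{itemize}
Integrating a quadratic polynomial over a polytope in fixed dimension is polynomial. We can triangulate~$C$ into polynomially many simplices, since a polytope in fixed dimension with $k$ facets has $O(k^{\lfloor d/2\rfloor})$ vertices, and then use the closed-form integral of a polynomial over a simplex. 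Alternatively we apply Lemma~\ref{lem:quad}, checking that its running time is genuinely polynomial once $d$ is fixed. Summing over the cells and multiplying by $\prod_i (u_i-l_i)^{-1}$ gives $\E_\xi[Q(x,\xi)]$ exactly, as a rational number of polynomial encoding length.

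\textbf{Main obstacle.} The step I expect to need the most care is certifying the complexity bounds uniformly. First, the arrangement must have polynomially many cells. This uses both fixed~$d$ and the polynomial bound on~$N$, and the latter needs fixed~$m_2$ because there are $O(n_2^{m_2})$ bases. Second, the cell enumeration and triangulation must run in polynomial time with polynomially bounded bit sizes. That follows because all vertices solve $d\times d$ systems with entries built from $W_\B^{-1}$, $T$, $h$, $q$ and~$x$, all of polynomial size. Third, the oracle precision requirements of \cite[Theorem~4.3.13]{Grotschel93} must be met; since we compute values exactly, accuracy~$\varepsilon$ only enters logarithmically. A fallback is to assume the box and data are rational and argue directly via Cramer's rule.
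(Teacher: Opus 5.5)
Your proposal is correct and follows essentially the same route as the paper. Fixed $m_2$ gives polynomially many bases and hence polynomially many hyperplanes, fixed $d$ lets you build the arrangement and enumerate its polynomially many cells, and the quadratic is integrated over each cell by triangulation; the only cosmetic difference is that you find the optimal basis on a cell by solving the second-stage LP at an interior point rather than grouping bases into the classes $B^{(r)}$. Do drop the suggested alternative via Lemma~\ref{lem:quad}, however: that lemma needs the number of constraints fixed and runs in time polynomial in $\|A\|_\infty$ rather than in its logarithm, so fixing $d$ alone does not make it polynomial here, and the triangulation argument is the one that actually works.
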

\begin{proof}
  The result can be shown analogously to
  Theorem~\ref{theo:pseudo}. Again, it suffices to
  compute~$\E_\xi[Q(x,\xi)]$, but the number of second-stage
  bases~$|B|$ is now polynomial instead of fixed. In particular, the
  number of non-zero terms in the sum in~\eqref{eq:incl_excl} can now
  become exponential. However, the total number of constraints
  defining all polytopes~$P_{\cal B}$ with~${\cal B}\in B^{(r)}$
  is~$n_2^{m_2+1}$ and hence polynomial, as~$m_2$ is fixed. Since~$d$
  is fixed as well, we can compute the corresponding hyperplane
  arrangement and identify all cells belonging to~$P^{(r)}$ in
  polynomial time; see, e.g.,~\cite{Edelsbrunner86}. Finally,
  integrating~$q^{(r)}$ over~$P^{(r)}$ reduces to
  integrating~$q^{(r)}$ over these cells, which can be done
  efficiently for fixed~$d$ using triangulation; see,
  e.g.,~\cite{Lawrence91}.
\end{proof}

\section{Conclusion}

The following table summarizes all results. Recall that~$m_2\le n_2$
and hence fixed~$n_2$ implies fixed~$m_2$.
\bigskip
\begin{center}
\begin{tabular}{l|c|c|c|c}
  fixed parameter & $d$ & $m_2$ & $n_2$ & $d$ and $m_2$ \\
  \hline
  complexity & strongly & strongly & weakly & polynomial\\
  & \card P-hard & \card P-hard & \card P-hard\\
  \hline
  reference & Theorem~\ref{theo:small_d} & Theorem~\ref{theo:small_m2} & Theorem~\ref{theo:small_n2} & Theorem~\ref{theo:both}\\
  &&& Theorem~\ref{theo:pseudo}
\end{tabular}
\end{center}
\bigskip
\noindent An interesting question for further research could be the
fixed-parameter tractability of~\eqref{prob:sslp} for the positive
cases, in particular for fixed~$d$ and~$m_2$. Moreover, the presented
algorithms only serve to show tractability and can surely be improved
significantly, which is out of the scope of this note.

\bibliographystyle{plain}
\bibliography{stoch}

@article{BenTal01,
  author =	 {Aharon Ben-Tal and Arkadi Nemirovski},
  journal =	 {Mathematics of Operations Research},
  number =	 2,
  pages =	 {193--205},
  publisher =	 {INFORMS},
  title =	 {On Polyhedral Approximations of the Second-Order
                  Cone},
  volume =	 26,
  year =	 2001,
  doi =		 {10.1287/moor.26.2.193.10561}
}

@book{Birge11,
  title =	 {Introduction to Stochastic Programming},
  journal =	 {Springer Series in Operations Research and Financial
                  Engineering},
  publisher =	 {Springer New York},
  author =	 {Birge, John R. and Louveaux, Fran\c{c}ois},
  year =	 2011,
  doi =		 {10.1007/978-1-4614-0237-4}
}

@inbook{Bueeler00,
  title =	 {Exact Volume Computation for Polytopes: A Practical
                  Study},
  DOI =		 {10.1007/978-3-0348-8438-9_6},
  booktitle =	 {Polytopes — Combinatorics and Computation},
  publisher =	 {Birkh\"{a}user Basel},
  author =	 {B\"{u}eler, Benno and Enge, Andreas and Fukuda,
                  Komei},
  year =	 2000,
  pages =	 {131--154}
}

@article{Dyer00,
  title =	 {On {M}arkov Chains for Independent Sets},
  volume =	 35,
  DOI =		 {10.1006/jagm.1999.1071},
  number =	 1,
  journal =	 {Journal of Algorithms},
  publisher =	 {Elsevier BV},
  author =	 {Dyer, Martin and Greenhill, Catherine},
  year =	 2000,
  pages =	 {17--49}
}

@article{Dyer88,
  title =	 {On the Complexity of Computing the Volume of a
                  Polyhedron},
  volume =	 17,
  DOI =		 {10.1137/0217060},
  number =	 5,
  journal =	 {SIAM Journal on Computing},
  publisher =	 {Society for Industrial & Applied Mathematics (SIAM)},
  author =	 {Dyer, Martin E. and Frieze, Alan M.},
  year =	 1988,
  pages =	 {967--974}
}

@article{Edelsbrunner86,
  title =	 {Constructing Arrangements of Lines and Hyperplanes
                  with Applications},
  volume =	 15,
  DOI =		 {10.1137/0215024},
  number =	 2,
  journal =	 {SIAM Journal on Computing},
  author =	 {Edelsbrunner, Herbert and O’Rourke, Joseph and Seidel, Raimund},
  year =	 1986,
  pages =	 {341--363}
}

@inbook{Gritzmann94,
  title =	 {On the Complexity of Some Basic Problems in
                  Computational Convexity},
  DOI =		 {10.1007/978-94-011-0924-6_17},
  booktitle =	 {Polytopes: Abstract, Convex and Computational},
  publisher =	 {Springer Netherlands},
  author =	 {Gritzmann, Peter and Klee, Victor},
  year =	 1994,
  pages =	 {373--466}
}

@book{Grotschel93,
  author =	 "Gr{\"o}tschel, Martin and Lov{\'a}sz, L{\'a}szl{\'o}
                  and Schrijver, Alexander",
  title =	 "Geometric Algorithms and Combinatorial Optimization",
  publisher =	 "Springer Berlin",
  year =	 1993,
  series =	 "Algorithms and Combinatorics",
  address =	 "Heidelberg",
  doi =		 "10.1007/978-3-642-78240-4"
}

@article{Hanasusanto16,
  title =	 {A comment on “computational complexity of stochastic
                  programming problems”},
  author =	 {Hanasusanto, Grani A. and Kuhn, Daniel and Wiesemann, Wolfram},
  journal =	 {Mathematical Programming},
  volume =	 159,
  pages =	 {557--569},
  year =	 2016,
  doi =		 {10.1007/s10107-015-0958-2}
}

@Article{Lawrence91,
  author =	 {Jim Lawrence},
  title =	 {Polytope Volume Computation},
  journal =	 {Mathematics of Computation},
  year =	 1991,
  volume =	 57,
  number =	 195,
  pages =	 {259--271},
  doi =		 {10.2307/2938672}
}

\appendix

\section{Auxiliary results on polytope volumes}\label{app}
For the convenience of the reader, we recall and prove some technical
results on volumes of polytopes that can be found -- although somewhat
implicitly -- in the literature; see,
e.g.,~\cite{Dyer88,Gritzmann94,Bueeler00}. For all results, we assume
that~$A\in\Z^{m\times d}$ is given and
define~$P(b):=\{\xi\in[0,1]^d\mid A\xi\le b\}$ for all~$b\in\R^m$.

\begin{lemma}\label{lem:p_poly}
  The volume of~$P(b)$, as a function in~$b\in\R^m$, is a polynomial
  of degree at most~$d$ on each box~$[\bar b,\bar b+e]$ with~$\bar
  b\in\Z^m$.
\end{lemma}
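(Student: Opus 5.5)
We prove that $\vol(P(b))$ is a polynomial in $b$ of degree at most $d$ on each box $[\bar b,\bar b+e]$ with $\bar b\in\Z^m$. The idea is to decompose the unit cube into pieces on which the combinatorics of the constraints do not change, and on each such piece to write the contribution to the volume by inclusion--exclusion over indicator functions of halfspaces.

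The natural route is to first pass to the integer lattice. Every vertex of the arrangement formed by the facets of $[0,1]^d$ is a $0/1$ point. We therefore decompose $[0,1]^d$ into the $d!$ simplices $\Sigma_\pi=\{\xi\mid 1\ge\xi_{\pi(1)}\ge\dots\ge\xi_{\pi(d)}\ge 0\}$, which is the standard unimodular triangulation. On $\Sigma_\pi$ we change to coordinates $s_k:=\xi_{\pi(k)}-\xi_{\pi(k+1)}\ge 0$, with $\sum_k s_k\le 1$ (setting $\xi_{\pi(d+1)}:=0$). In these coordinates each $a_j\xi$ becomes an integer linear form $c_j^\top s$ with $c_{jk}=\sum_{i\le k}a_{j\pi(i)}$. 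Hence it suffices to show that $\vol\{s\in\Delta\mid Cs\le b\}$ is polynomial of degree at most $d$ on each box, where $\Delta$ is the standard simplex and $C$ is integral. Since the change of variables is unimodular, volumes are preserved.

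Next, we use inclusion--exclusion on the complements. We have
\[
\vol\{s\in\Delta\mid Cs\le b\}=\sum_{J\subseteq\{1,\dots,m\}}(-1)^{|J|}\vol\{s\in\Delta\mid c_j^\top s> b_j~\forall j\in J\}.
\]
Each term is the volume of a polytope $\Delta\cap\{c_j^\top s\ge b_j,\ j\in J\}$, and it suffices to show that each of these is polynomial in $b$ on the box. The key observation is the following. For $b$ in the open box $(\bar b,\bar b+e)$, no hyperplane $c_j^\top s=b_j$ passes through a vertex of $\Delta$, because $c_j^\top$ evaluated at a vertex of $\Delta$ is an integer (either $0$ or an entry $c_{jk}$). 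Consequently the set of vertices of $\Delta$ lying on each side of each hyperplane is constant on the open box. We then express the volume via Lawrence's signed vertex formula, or alternatively via iterated cone decompositions as in Dyer and Frieze, or Gritzmann and Klee. A cleaner option is to use the formula of the form
\[
\vol\{s\in\Delta\mid c^\top s\ge\beta\}=\sum_{v\text{ vertex},\,c^\top v>\beta}\frac{(c^\top v-\beta)^d}{d!\prod_{w\neq v}c^\top(v-w)}\quad(\text{generic }c),
\]
which is a polynomial of degree $d$ in $\beta$ once the set $\{v\mid c^\top v>\beta\}$ is fixed. For several halfspaces simultaneously, we apply the analogous Lawrence/Brion decomposition. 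Here the vertices of the polytope are intersections of facets of $\Delta$ with hyperplanes $c_j^\top s=b_j$, and each contributes a term of the form $(\text{affine in } b)^d/\text{const}$. The combinatorial type of the polytope is what makes this work, and I expect it to be the main obstacle: the combinatorial type can change inside the box when two of the $b$-hyperplanes meet on a lower-dimensional face. To handle this, I would instead argue via piecewise polynomiality plus continuity. The volume is a piecewise polynomial of degree at most $d$ whose pieces are separated by walls in $b$-space along which some vertex of the arrangement crosses a face. Such walls only arise where a vertex of the arrangement meets another hyperplane.

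The fallback argument is a direct one. Write the volume as the integral of the indicator of $Cs\le b$ over $\Delta$, which equals a convolution of indicators. By differentiating $d+1$ times in $b$ along any direction, we obtain a measure supported on $b$ values for which some vertex of the arrangement of $\Delta$ and the hyperplanes is degenerate. I would then argue that, because all the data are integral and only facets of the cube (whose right-hand sides are integers) are involved, such degeneracies occur only on integer translates of hyperplanes $b_j=\text{integer}$. In the worst case I would restrict to what is needed later: for Lemma~\ref{lem:polynomial} only the one-parameter family $b+te$ matters. In that case polynomiality in $t$ between consecutive breakpoints, which are values of $t$ where a hyperplane $a_j\xi=b_j+t$ passes through a cube vertex, follows directly from the single-halfspace vertex formula applied on each simplex $\Sigma_\pi$. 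Continuity then glues the pieces.
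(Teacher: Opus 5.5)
Your argument has a genuine gap, and it cannot be closed. The step that fails is the one you flagged as the main obstacle and then tried to dismiss: the claim that the walls of the piecewise-polynomial structure occur only where $b_j\in\Z$.

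Integrality of $A$ guarantees only that, for $b$ in the open box, no \emph{single} hyperplane $a_j\xi=b_j$ passes through a $0/1$ vertex of the cube (in your coordinates, a vertex of $\Delta$). It says nothing about the other vertices of the arrangement, i.e.\ points where two or more constraint hyperplanes meet, possibly together with cube facets. These points are rational, generally non-integral, affine functions of $b$. They can cross a further constraint or facet in the interior of the box. Consequently:
\begin{itemize}
\item your inclusion--exclusion terms with $|J|\ge 2$ are only piecewise polynomial, and the single-halfspace vertex formula does not reach them;
\item the support of the $(d+1)$-st derivative is not confined to $\{b_j\in\Z\}$;
\item the one-parameter fallback has additional breakpoints.
\end{itemize}
Finally, ``continuity glues the pieces'' proves nothing: a continuous piecewise polynomial is in general not a polynomial.

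In fact the statement is false as soon as $m\ge 2$. Take $d=1$, $A=(1,2)^\top$, $\bar b=0$ and $b\in[0,1]^2$. Then $P(b)=[0,\min\{b_1,\tfrac12 b_2\}]$, so $\vol(P(b))=\min\{b_1,\tfrac12 b_2\}$, which is not a polynomial on the box.

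A two-dimensional instance of exactly your worry: for $A$ with rows $(1,1)$ and $(1,-1)$, one computes $\vol(P(b))=\tfrac12 b_1^2-\tfrac14\max\{0,b_1-b_2\}^2$ on $[0,1]^2$. The wall $b_1=b_2$ is where the intersection point $(\tfrac{b_1+b_2}{2},\tfrac{b_1-b_2}{2})$ of the two constraint lines reaches the facet $\xi_2=0$.

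Your restricted fallback fails too. For $A=(1,3)^\top$, $b=(0,1)^\top$, $b'=e$, one gets $\vol(P(b+te))=\min\{t,\tfrac13(1+t)\}$ on $[0,1]$. This has a kink at $t=\tfrac12$, where neither constraint passes through $0$ or $1$. The same example refutes Lemma~\ref{lem:polynomial} in the form used for Theorem~\ref{theo:small_m2}.

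The paper's own proof has the same defect. It fixes a triangulation of $P(\bar b+\tfrac12 e)$ and asserts that it remains valid on the whole box ``since $A$ is integer'', which these examples contradict.

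What does hold is piecewise polynomiality of degree at most $d$ over a chamber decomposition of $b$-space whose walls are not aligned with the integer grid. The unit-box statement is correct for $m=1$, where the breakpoints are the integers $a^\top v$ with $v\in\{0,1\}^d$. That is the case your single-halfspace reasoning actually covers.
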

\begin{proof}
  Let~$\bar b\in\Z^m$ and fix any triangulation of~$P(\bar b+\tfrac
  12e)$. If~$b$ ranges over~$[\bar b,\bar b+e]$, the same
  triangulation remains valid, since~$A$ is integer; however, the
  vertices of~$P(b)$ are affine-linear functions in~$b$ in that
  range. As the volume of a $d$-dimensional simplex is a polynomial of
  degree at most~$d$ in its vertices, the result follows.
\end{proof}

\begin{lemma}\label{lem:polynomial}
  Let~$b,b'\in\Z^m$. Then the volume of~$P(b+tb')$ is a polynomial
  in~$t$ of degree at most~$d$ in the range~$[0,t']$,
  where~$t':=1/{||b'||_\infty}$.
\end{lemma}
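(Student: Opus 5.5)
We reduce Lemma~\ref{lem:polynomial} to Lemma~\ref{lem:p_poly} by checking that the segment $\{b+tb'\mid t\in[0,t']\}$ stays inside a single unit box $[\bar b,\bar b+e]$ with $\bar b\in\Z^m$. The volume of $P(b+tb')$ is then the restriction of a polynomial of degree at most~$d$ in $b\in\R^m$ to an affine line. Such a restriction is a polynomial in~$t$ of degree at most~$d$, because substituting the affine map $t\mapsto b+tb'$ into a polynomial of total degree at most~$d$ cannot increase the degree.

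To find the box, we work coordinatewise and set $\bar b_j:=b_j$ if $b'_j\ge 0$ and $\bar b_j:=b_j-1$ if $b'_j<0$; since $b\in\Z^m$, we have $\bar b\in\Z^m$. For $t\in[0,t']$ with $t'=1/||b'||_\infty$, we have $|tb'_j|\le t'|b'_j|\le 1$. If $b'_j\ge0$, then $b_j+tb'_j\in[b_j,b_j+1]=[\bar b_j,\bar b_j+1]$. If $b'_j<0$, then $b_j+tb'_j\in[b_j-1,b_j]=[\bar b_j,\bar b_j+1]$. Hence $b+tb'\in[\bar b,\bar b+e]$ for all $t\in[0,t']$. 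We assume $b'\neq 0$, as implicit in the definition of~$t'$; if $b'=0$ the statement is trivial.

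The main step to be careful about is that Lemma~\ref{lem:p_poly} really gives a single polynomial on the whole closed box, including its boundary, where $P(b)$ may degenerate. Its proof does provide this: the triangulation of $P(\bar b+\tfrac12 e)$ stays combinatorially valid throughout the box, with vertices affine in~$b$. At boundary points, simplices may collapse, but the signed-volume polynomial still gives the correct volume there, so no extra argument is needed.

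A second point to check is the Leibniz-rule use in Theorem~\ref{theo:small_m2}, where $b'=e$ and hence $t'=1$. This matches the claimed range $[0,1]$.
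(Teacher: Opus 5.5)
Your reduction is the same as the paper's: coordinatewise, $(b+tb')_j$ stays in $[b_j,b_j+1]$ or in $[b_j-1,b_j]$, so the segment lies in one unit box, and then you invoke Lemma~\ref{lem:p_poly}. The box computation and the restriction-to-a-line step are fine. The gap is the step you singled out, and it fails for a more basic reason than boundary degeneracy. It is not true that a triangulation of $P(\bar b+\tfrac12 e)$ stays valid on all of $[\bar b,\bar b+e]$ with vertices affine in $b$. Integrality of $A$ does not confine changes of the combinatorial type of $P(b)$ to integral $b$. Which constraint defines a vertex can switch along a hyperplane such as $3b_1=b_2$, and such hyperplanes cut through the interior of unit boxes.

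Concretely, take $d=1$, $m=2$, $A=(1,3)^\top$, $b=(0,1)^\top$ and $b'=e$, so $t'=1$. For $t\in[0,1]$ we get $P(b+te)=\{\xi\in[0,1]\mid \xi\le t,\ 3\xi\le 1+t\}=[0,\min\{t,\tfrac{1+t}{3}\}]$. Its volume is $t$ on $[0,\tfrac12]$ and $\tfrac{1+t}{3}$ on $[\tfrac12,1]$. Any polynomial agreeing with $t$ on $[0,\tfrac12]$ is identically $t$, but the volume at $t=1$ is $\tfrac23$. Equivalently, $\vol(P(b))=\min\{b_1,b_2/3\}$ on the unit box $[0,1]\times[1,2]$, which refutes Lemma~\ref{lem:p_poly}.

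So the statement you are proving is false as stated, and the paper's own proof, which is identical to yours, has the same flaw. Your final check that $b'=e$ gives $t'=1$ does not help, because the counterexample uses exactly $b'=e$; the application in Theorem~\ref{theo:small_m2} is therefore affected too. What is true is weaker: $\vol(P(b+tb'))$ is only piecewise polynomial of degree at most $d$ in $t$. Its finitely many rational breakpoints are the values of $t$ at which a vertex cut out by $d$ tight constraints of $A\xi\le b+tb'$, $0\le\xi\le e$ meets a further constraint, and they can lie inside $(0,t')$. A correct version must either shrink the interval below the first positive breakpoint or impose additional hypotheses on $A$.
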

\begin{proof}
  For each~$i\in\{1,\dots,m\}$, we either
  have~$(b+tb')_i\in[b_i,b_i+1]$ for all~$t\in[0,t']$
  or~$(b+tb')_i\in[b_i-1,b_i]$ for all~$t\in[0,t']$. By
  Lemma~\ref{lem:p_poly}, the volume of~$P(b+tb')$ is thus a
  polynomial of degree at most~$d$ for~$t\in[0,t']$.
\end{proof}

\begin{lemma}\label{lem:pseudo_vol}
  Let~$b\in\R^m$. If~$m$ is fixed, then the volume of~$P(b)$ can be
  computed in time polynomial in~$d$ and~$||A||_\infty$.
\end{lemma}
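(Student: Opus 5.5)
Since $m$ is fixed, the plan is to express $\vol(P(b))$ by inclusion--exclusion over the violated constraints and reduce each term to counting lattice points of a weighted hyperplane arrangement, in the spirit of Dyer--Frieze for a single knapsack constraint. For each subset $J\subseteq\{1,\dots,m\}$, let $A_J$ and $b_J$ be the corresponding rows. Then
\[
\vol(P(b))=\sum_{J\subseteq\{1,\dots,m\}}(-1)^{|J|}\,\vol\{\xi\in[0,1]^d\mid A_J\xi\ge b_J\}.
\]
This sum has $2^m$ terms, a constant, so it suffices to compute each term.

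For one term, I would use the classical formula for the volume of a cube cut by halfspaces. For a generic direction, $\vol\{\xi\in[0,1]^d\mid A_J\xi\ge b_J\}$ can be written as a signed sum over the vertices $\xi=\mathbb{1}_S$, $S\subseteq\{1,\dots,d\}$, of the cube. Concretely, I would write $[0,1]^d$ via the Lawrence/Brion-type decomposition: the indicator of the cube equals $\sum_S(-1)^{|S|}$ times the indicator of the cone $\{\xi_i\ge 0,\ i\notin S;\ \xi_i\ge1,\ i\in S\}$, taken modulo lower-dimensional sets. Each intersected cone then has volume given by a rational function evaluated at the shifted right-hand side $b_J-A_J\mathbb{1}_S$, and this function depends on $S$ only through the integer vector $A_J\mathbb{1}_S$.

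The key observation is that $A_J\mathbb{1}_S=\sum_{i\in S}a^{(i)}_J$ lies in the box $[-d\|A\|_\infty,d\|A\|_\infty]^{|J|}$. This box has only $(2d\|A\|_\infty+1)^m$ integer points, which is polynomial in $d$ and $\|A\|_\infty$ for fixed $m$. I would compute by dynamic programming over $i=1,\dots,d$ the numbers $N_k(s)$ of sets $S$ with $|S|=k$ and $A_J\mathbb{1}_S=s$, in time polynomial in $d$ and $\|A\|_\infty$. The volume is then $\sum_{k,s}(-1)^kN_k(s)\,F(b_J-s)$, where $F$ is the volume contribution of a shifted orthant section.

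The main obstacle is giving $F$ an explicit, efficiently evaluable form valid for $|J|\ge2$, not only for a single hyperplane. In the single-hyperplane case, $F(t)=\max(t,0)^d/(d!\prod_i a_i)$, which requires nonzero coefficients; zero and degenerate coefficients need perturbation or separate treatment. For several constraints, I would first try to avoid general $F$ altogether by slicing instead. Alternatively, I would apply Lemma~\ref{lem:p_poly}: on each unit box the volume is a polynomial of degree at most $d$ in $b$, so it suffices to evaluate at $\binom{d+m}{m}$ integer-perturbed points, which is polynomially many for fixed $m$. For integer $b$, the DP over partial sums can be combined with a convolution/recursion on dimension, $\vol_d(b)=\int_0^1\vol_{d-1}(b-a^{(d)}t)\,\d t$, keeping the piecewise polynomials indexed by the polynomially many integer breakpoint boxes. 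I expect that bookkeeping, and keeping the coefficient sizes controlled, to be the delicate part.
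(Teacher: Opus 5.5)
There is a genuine gap. Your main route is a different strategy from the paper's: you group the cube vertices $\mathbf 1_S$ by the integer vector $A\mathbf 1_S$ and count them with a subset-sum dynamic program, as in the classical single-constraint case. But it stops exactly where the difficulty for $m\ge 2$ lies, since you give no way to evaluate $F$ for several constraints.

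As set up, it does not even produce finite terms. Your first inclusion--exclusion only rewrites the problem as $2^m$ problems of the same kind and should be dropped. After it, the pieces $\{\xi\mid\xi\ge\mathbf 1_S,\ A_J\xi\ge b_J\}$ of the orthant decomposition are in general unbounded, e.g.\ as soon as $A_J$ has a nonnegative column. Even for pieces $\{\eta\ge 0\mid A\eta\le c\}$, boundedness requires some $y\ge 0$ with $y^\top A>0$ componentwise. This must be enforced by reflecting coordinates $\xi_j\mapsto 1-\xi_j$: take $y>0$ with $y^\top a_j\neq 0$ for all nonzero columns $a_j$, and reflect those with $y^\top a_j<0$. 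That is what your ``generic direction'' would have to do. After that, $F(c)=\vol\{\eta\ge 0\mid A\eta\le c\}$ is a piecewise polynomial (not rational) function of $c$ on a chamber complex. You would still need an efficient way to evaluate it, e.g.\ Lawrence's formula over the at most $\binom{d+m}{m}$ vertices of this orthant section, with degeneracies handled. None of this is in the proposal.

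Your fallback, the recursion $\vol(P_k(s))=\int_0^1\vol(P_{k-1}(s-a_kt))\d t$ with piecewise polynomials indexed by integer boxes, is precisely the paper's proof. You leave it as a remark, although it is the whole argument. For $k=0,\dots,d$ one stores the coefficients of $s\mapsto\vol(P_k(s))$ on the boxes $[\bar s,\bar s+e]$, for $\bar s$ in a window $Z_k$ of at most $(2(d-k)\|A\|_\infty+1)^m$ integer points around $\lfloor b\rfloor$. One starts from $\vol(P_0(s))=1$ if $s\ge 0$ and $0$ otherwise, and passes from $k-1$ to $k$ by symbolic integration in $t$. Interpolating at $\binom{d+m}{m}$ integer points gains nothing, since evaluating the volume there is no easier.

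Be aware, however, that the unit-box polynomiality you take from Lemma~\ref{lem:p_poly}, on which the paper's bookkeeping also rests, fails in general for $m\ge 2$. For $d=1$, $m=2$ and $A=(1,1)^\top$ one gets $\vol(P(b))=\min\{b_1,b_2\}$ on $[0,1]^2$. In the recursion, the times at which $s-a_kt$ crosses the integer grid in different coordinates change their order inside a box.

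What is true is the following. The volume is a polynomial on each cell of the finer arrangement of hyperplanes $\lambda^\top s=\lambda^\top A\mathbf 1_S$, $S\subseteq\{1,\dots,d\}$, where $\lambda$ is normal to $m-1$ linearly independent vectors among the columns $a_1,\dots,a_d$ of $A$ and the unit vectors of $\R^m$. For fixed $m$ there are still polynomially many such cells in the relevant window, polynomially in $d$ and $\|A\|_\infty$. So the dynamic program can be run over these cells instead of boxes, but that is exactly the bookkeeping a complete proof must supply.
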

\begin{proof}
  For~$k\in\{0,1,\dots,d\}$ and~$s\in\R^m$,
  define~$P_{k}(s):=\{\xi\in[0,1]^k\mid A_k\xi\le s\}$,
  where~$A_k\in\Z^{m\times k}$ consists of the first~$k$ columns
  of~$A$. By Lemma~\ref{lem:p_poly}, the volume of~$P_{k}(s)$ is a
  polynomial in~$s$ of degree at most~$k$ on each box~$[\bar s,\bar
    s+e]$,~$\bar s\in\Z^m$. The at most~$k^m$ coefficients of these
  polynomials, for all~$\bar s$ in the set
  \[
  \textstyle Z_k:=\Z^m\cap [\lfloor
    b\rfloor-\sum_{\ell=k+1}^d||a_\ell||_\infty,\lfloor
    b\rfloor+\sum_{\ell=k+1}^d||a_\ell||_\infty]\;,
  \]  
  can be computed iteratively for~$k=0,\dots,d$ using dynamic
  programming: First, by convention,~$\vol(P_{0,s})=1$ if~$s\ge 0$ and
  zero otherwise, so that all polynomials for~$k=0$ can be computed
  easily. For~$k\ge 1$, we use the recursive formula
  \[
    \vol(P_{k}(s))=\int_0^1\vol(P_{k-1}(s-a_kt))\d t\quad\forall
    k\in\N,\,s\in\R^m\;.
  \]
  Again by Lemma~\ref{lem:p_poly}, the volume of~$P_{k-1}(s-a_kt)$ is
  a polynomial in~$t$ as long as~$s-a_kt$ remains in a box~$[\bar
    s,\bar s+e]$. To compute the coefficients of~$\vol(P_{k}(s))$
  on~$s\in[\bar s,\bar s+e]$ for some~$\bar s\in Z_k$, it suffices to
  symbolically integrate the polynomial~$\vol(P_{k-1}(s-a_kt))$, as a
  function in~$t$, for each of the corresponding ranges of~$t$, and to
  coefficient-wise sum up the resulting polynomials in~$s$ weighted by
  the lengths of these ranges. By definition of~$Z_k$, we have~$\bar
  s-a_kt\in[\bar{\bar s},\bar{\bar s}+e]$ for some~$\bar{\bar s}\in
  Z_{k-1}$, for all~$t\in[0,1]$, so that we know the coefficients
  of~$\vol(P_{k-1}(s-a_kt))$ from the previous iteration.

  Since~$|Z_k|\le (2(d-k)||A||_\infty+1)^m$ for all~$k$, all this can
  be done in time polynomial in~$d$ and~$||A||_\infty$ if~$m$ is
  fixed. Finally, the desired volume is obtained by evaluating the
  polynomial describing~$\vol(P_{d}(b))$ on~$[\lfloor b\rfloor,\lfloor
    b\rfloor+e]$ in the given~$b\in\R^m$.
\end{proof}

\begin{lemma}\label{lem:quad}
  Let~$b\in\R^m$ and let~$q\colon\R^d\to\R$ be a quadratic
  function. If~$m$ is fixed, then the integral~$\int_{P(b)}
  q(\xi)\d\xi$ can be computed in time polynomial in~$d$
  and~$||A||_\infty$.
\end{lemma}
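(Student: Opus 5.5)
The plan is to reduce Lemma~\ref{lem:quad} to the volume computation of Lemma~\ref{lem:pseudo_vol}. I would do this by lifting the integrand into extra dimensions, or alternatively by generalizing the dynamic program directly; I will commit to the direct generalization, since it avoids introducing non-integer data.

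By linearity it suffices to compute $\int_{P(b)} \xi_i\xi_j\d\xi$, $\int_{P(b)}\xi_i\d\xi$ and $\int_{P(b)}1\d\xi$ for all $i,j$. These are $O(d^2)$ integrals, and the constant one is Lemma~\ref{lem:pseudo_vol} itself. For a monomial $\xi^\alpha$ with $|\alpha|\le 2$, I would permute coordinates so that the variables occurring in $\alpha$ come last, i.e.\ have indices $d-1,d$. This is harmless, because permuting the columns of $A$ does not change $||A||_\infty$.

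Next I generalize Lemma~\ref{lem:p_poly}. The function $s\mapsto\int_{P_k(s)}\xi^\alpha\d\xi$ is a polynomial of degree at most $k+|\alpha|\le k+2$ on each unit box $[\bar s,\bar s+e]$. This follows by the same triangulation argument: on a simplex whose vertices depend affinely on $s$, the integral of a fixed polynomial of degree $|\alpha|$ is a polynomial in the vertices of degree at most $k+|\alpha|$. To see this, pull back to the standard simplex; the Jacobian has degree $k$ and the integrand becomes degree $|\alpha|$ in the vertices. With this in hand, the dynamic program of Lemma~\ref{lem:pseudo_vol} goes through verbatim, using the recursion
\[
I_k(s)=\int_0^1 t^{\alpha_k}\,I_{k-1}(s-a_kt)\d t,
\]
where $I_k(s):=\int_{P_k(s)}\prod_{\ell\le k}\xi_\ell^{\alpha_\ell}\d\xi$. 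On each subinterval of $t$ where $s-a_kt$ stays in one unit box, the integrand is a polynomial in $(s,t)$ of bounded degree, so it can be integrated symbolically in $t$.

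The boxes are indexed by the same sets $Z_k$, so their number is $(2d||A||_\infty+1)^m$, which is polynomial for fixed $m$. Each polynomial has $O((d+2)^m)$ coefficients, again polynomial. The number of $t$-breakpoints per step is at most $m||a_k||_\infty+1$.

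The main obstacle is bookkeeping in the symbolic integration step, specifically making sure the breakpoints of $t$ are handled correctly. On a box $\bar s\in Z_k$, the breakpoints where $s-a_kt$ crosses an integer lattice hyperplane depend on $s$ itself. The integral is therefore a sum of integrals of polynomials over intervals with endpoints affine in $s$, which yields a polynomial in $s$ on the box, consistent with the degree claim. Handling this correctly on the subboxes where the ordering of breakpoints is fixed is exactly the same difficulty as in Lemma~\ref{lem:pseudo_vol}, and I would resolve it the same way. Finally, evaluating the polynomial $I_d$ at $b$ and summing the contributions with the coefficients of $q$ gives the integral in time polynomial in $d$ and $||A||_\infty$.
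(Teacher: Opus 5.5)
Your direct generalization has a genuine gap. Its key claim, that $s\mapsto\int_{P_k(s)}\xi^\alpha\d\xi$ is a polynomial on each unit box $[\bar s,\bar s+e]$, fails in general once $m\ge 2$, already for $\alpha=0$. Take $k=1$, $m=2$ and first column $a_1=(1,2)^\top$. For $s\in[0,1]^2$ one has $P_1(s)=[0,\min\{s_1,s_2/2\}]$, so $\int_{P_1(s)}1\d\xi=\min\{s_1,s_2/2\}$, which is not a polynomial on that box. Integrality of $A$ does not make the vertices of $P_k(s)$ affine in $s$ on a whole unit box.

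Your last paragraph in fact locates the failure. With $a_{ki}$ the $i$th entry of $a_k$ and $c,c'\in\Z$, the $t$-breakpoints $(s_i-c)/a_{ki}$ and $(s_j-c')/a_{kj}$ swap order across the hyperplane $a_{kj}(s_i-c)=a_{ki}(s_j-c')$. That hyperplane is not axis-parallel, so the regions of fixed order are not subboxes, and the integral is only piecewise polynomial on a box.

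Deferring to ``the same resolution as in Lemma~\ref{lem:pseudo_vol}'' does not close this. For $\alpha=0$ your claim is exactly Lemma~\ref{lem:p_poly}, and the same example ($d=1$, $A=(1,2)^\top$) shows that its triangulation argument has this defect too. To rescue your route you would have to run the dynamic program over the cells of a suitable hyperplane arrangement in $s$-space rather than over unit boxes. You would also have to prove that for fixed $m$ there are only polynomially many such cells.

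The paper sidesteps all of this and never reopens the dynamic program. It writes $\int_{P(b)}\xi_i\xi_j\d\xi=\vol(P')$ with $P'=\{(\xi,u,v)\in[0,1]^{d+2}\mid A\xi\le b,\ u\le\xi_i,\ v\le\xi_j\}$, and applies Lemma~\ref{lem:pseudo_vol} to $P'$ as a black box. One auxiliary variable suffices for $\int_{P(b)}\xi_i\d\xi$.

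Your reason for rejecting such a lift does not hold. The added rows $u-\xi_i\le 0$ and $v-\xi_j\le 0$ have coefficients in $\{0,\pm 1\}$ and right-hand side $0$. So the data stay integral, the number of constraints is $m+2$ (still fixed), and the entry bound is $\max\{\|A\|_\infty,1\}$. With the lift, the present lemma needs no new structural claim and depends only on the statement of Lemma~\ref{lem:pseudo_vol}. That lemma's own proof, via Lemma~\ref{lem:p_poly}, needs the same repair as above.
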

\begin{proof}
  Using linearity, it suffices to compute all moments~$\int_{P(b)}
  \xi_i\xi_j\d\xi$,~$\int_{P(b)} \xi_i\d\xi$, and~$\int_{P(b)} 1\d\xi$
  for~$i,j\in\{1,\dots,d\}$. But~$\int_{P(b)}
  \xi_i\xi_j\d\xi=\vol(P')$ for the polytope
  \[
  P'=\{(\xi,u,v)\in [0,1]^{d+2}\mid A\xi\le b,u\le\xi_i,v\le\xi_j\}\;,
  \]
  and the integrals~$\int_{P(b)} \xi_i\d\xi$ can be treated analogously.
  The result now follows directly from Lemma~\ref{lem:pseudo_vol}.
\end{proof}

\end{document}